\def\thesection{\arabic{section}}
\def\theequation{\thesection.\arabic{equation}}
\def\R{\mathbb{R}}
\newcommand{\e}{\epsilon}
\newcommand{\Om} {\Omega}
\newcommand{\De} {\Delta}
\newcommand{\la} {\lambda}
\newcommand{\noi} {\noindent}
\newcommand{\mb} {\mathbb}
\newcommand{\mc} {\mathcal}
\markboth{\small } {\small Nonlocal singular equations}
\def\theequation{\@arabic{\c@section}.\@arabic{\c@equation}}
\newcommand{\QED}{\rule{2mm}{2mm}}
\newtheorem{Theorem}{Theorem}[section]
\newtheorem{Lemma}[Theorem]{Lemma}
\newtheorem{Proposition}[Theorem]{Proposition}
\newtheorem{Corollary}[Theorem]{Corollary}
\newtheorem{Remark}[Theorem]{Remark}
\newtheorem{Definition}[Theorem]{Definition}
\begin{document}

{\vspace{0.01in}}

\title
{ \sc Existence of three positive solutions for  a nonlocal singular dirichlet boundary problem}

\author{J. Giacomoni\footnote{Universit\'e de Pau et des Pays de l'Adour, CNRS, LMAP (UMR 5142) Bat. IPRA,
  Avenue de l'Universit\'e,
   64013 Pau cedex, France. e-mail:jacques.giacomoni@univ-pau.fr}, ~~ T. Mukherjee\footnote{Department of Mathematics, Indian Institute of Technology Delhi,
Hauz Khaz, New Delhi-110016, India.
 e-mail: tulimukh@gmail.com}~ and ~K. Sreenadh\footnote{Department of Mathematics, Indian Institute of Technology Delhi,
Hauz Khaz, New Delhi-110016, India.
 e-mail: sreenadh@gmail.com} }

\date{}

\maketitle

\begin{abstract}
In this article, we prove the existence of at least three positive solutions for the following nonlocal singular problem
\begin{equation*}
(P_\la)\left\{
\begin{split}
(-\De)^su &= \la\frac{f(u)}{u^q}, \; \; u>0 \;\; \text{in}\;\; \Om,\\
u &= 0\;\; \text{in}\;\; \mb R^n \setminus \Om
\end{split}
\right.
\end{equation*}
where $(-\De)^s$ denotes the fractional Laplace operator for $s\in (0,1)$, $n>2s$, $q \in (0,1)$, $\la>0$ and $\Om$ is smooth bounded domain in $\mb R^n$. Here $f :[0,\infty) \to [0,\infty)$ is a continuous nondecreasing map satisfying $\lim\limits_{u\to \infty}\frac{f(u)}{u^{q+1}}=0$. We show that under certain additional assumptions on $f$, $(P_\la)$ possesses at least three distinct solutions for a certain range of $\la$. We use the method of sub-supersolutions and a critical point theorem by Amann \cite{amann} to prove our results. Moreover, we prove a new existence result for a suitable infinite semipositone nonlocal problem which played a crucial role to obtain our main result and is of independent interest.
\medskip

\noi \textbf{Key words:} Fractional Laplacian, singular nonlinearity, infinite semipositone problem, sub-supersolutions, three positive solutions.

\medskip

\noi \textit{2010 Mathematics Subject Classification:} 35R11, 35R09, 35A15.

\end{abstract}

\section{Introduction}
In the present  paper, we consider the following nonlocal singular problem
\begin{equation*}
(P_\la)\left\{
\begin{split}
(-\De)^su &= \la\frac{f(u)}{u^q}, \; \; u>0 \;\; \text{in}\;\; \Om,\\
u &= 0\;\; \text{in}\;\; \mb R^n \setminus \Om
\end{split}
\right.
\end{equation*}
where $s\in (0,1)$, $q \in (0,1)$, $\la>0$ and $\Om$ is smooth bounded domain in $\mb R^n$.
We have the following assumptions on $f \in C^1([0,\infty))$:
\begin{enumerate}
\item[(f1)] $f(0)>0$,
\item[(f2)] $\lim\limits_{u \to \infty} \frac{f(u)}{u^{q+1}}=0$,
\item[(f3)] $u\mapsto f(u)$ is non decreasing in $\R^+$,
\item[(f4)] There exist a $0<\sigma_1<\sigma_2$ such that  $\frac{f(u)}{u^{q}}$ is non decreasing on $(\sigma_1,\sigma_2)$.
\end{enumerate}
Note that from (f1) we have $\lim\limits_{u \to 0} \frac{f(u)}{u^{q }}=\infty$.
{\begin{Remark}
It is easy to see that the function $f$ defined as $\displaystyle f(t)=e^{\frac{\alpha t}{\alpha+t}}$ for any $t\geq 0$ with $\alpha> 4q$ satisfy assumptions (f1)-(f4).
\end{Remark}
\noi The fractional Laplace operator $(-\De)^s$ is defined  as
$$ (-\De)^s u(x) = 2C^n_s\mathrm{P.V.}\int_{\mb R^n} \frac{u(x)-u(y)}{\vert x-y\vert^{n+2s}}\,\mathrm{d}y$$
{where $\mathrm{P.V.}$ denotes the Cauchy principal value and $C^n_s=\pi^{-\frac{n}{2}}2^{2s-1}s\frac{\Gamma(\frac{n+2s}{2})}{\Gamma(1-s)}$, $\Gamma$ being the Gamma function.} The fractional  Laplacian is the infinitesimal generator of {L\'evy} stable diffusion processes and arises in anomalous diffusion in plasma, population dynamics, geophysical fluid dynamics, flames propagation, chemical reactions in liquids and American options in finance, see \cite{da} for instance. {Fractional Sobolev spaces were introduced mainly in the framework of harmonic analysis in the middle part of last century and are the natural setting to study weak solutions to problems involving the fractional Laplacian. In this regard, the paper of Caffarelli and Silvestre \cite{CS} on the harmonic extension problem have subsequently motivated many works on equations and systems involving the fractional Laplacian $(-\De)^s$, $s \in(0,1)$.} We also refer \cite{radu2} to readers for a detailed study on variational methods for fractional elliptic problems and for additional references.

\noi In the local case , i.e. $s=1$, the study of elliptic singular problems starts mainly with the pioneering work of Crandal, Rabinowitz and Tartar \cite{crt}. This seminal work inspired a huge list of articles where authors have investigated many different issues (existence/nonexistence, uniqueness/multiplicity, regularity of solutions, etc.) about singular problems in the local and more recently in the nonlocal set up. We cite here some related works with no intent to furnish an exhaustive list. The multiplicity of solutions for singular problem with critical nonlinearity has been studied in \cite{haitoh,hcn1,hcn2} while the exponential critical nonlinearity has been dealt with in \cite{DJP}. Semilinear elliptic and singular problems with convection term was first studied in \cite{GhRa2} whereas \cite{DiMoOs} brought existence results to elliptic equations involving a singular absorption term. We refer the surveys \cite{GhRa} and \cite{HeMa} for further details on singular elliptic equations in the local setting.  In the nonlocal case, singular problem with critical nonlinearity has been studied in \cite{peral,TJS,TS}. Recently, Adimurthi, Giacomoni and Santra \cite{AJS} studied the following nonlocal singular problem:
\begin{equation}
(-\De)^su =\la (K(x)u^{-\delta}+f(u)),\; u>0\; \text{in}\; \Om,\;\;\; u = 0 \; \text{in} \; \mb R^n \setminus \Om
\end{equation}
where $\delta, \;\la>0$, $K :\Om \to \mb R^+$ is a H\"{o}lder continuous function in $\Om$ behaves like dist$(x,\partial\Om)^{-\beta}$, $\beta \in [0,2s)$ and $f$ is a positive real valued $C^2$ function. They established existence, regularity and bifurcation results using the framework of weighted spaces.

\noi Recently, \cite{DI} has {established} the existence of three non-zero solutions for  a Dirichlet type boundary value problem involving the fractional Laplacian. But the study of three solutions for singular nonlocal problems was completely open till now. Our work brings new results in this regard. We use the method of sub and supersolution combined with a fix point theorem due to Amann to achieve the objective. For the construction of the barrier functions, we have taken some ideas from \cite{KLS}.

\noi The salient feature of this work is the presence of the singular term $u^{-q}$ which is a primary hindrance in making the operator monotone. We slightly transform the problem to a new one and show that the operator associated with it becomes monotone increasing and compact. This idea had been formerly used in \cite{Dhanya-JMAA} in the local case. But here itself we remark that their approach can not be directly applied to the problem $(P_\la)$ due to the presence of the nonlocal operator '$(-\De)^s $' instead of $\De$. Most substantially, Theorem $3.6$ of \cite{Dhanya-JMAA} can not be adapted here due to the lack of an explicit form of $(-\De)^s \delta^s(x)$  where $\delta(x)=\text{dist}(x,\partial \Om)$ denotes the distance function up to the boundary. To overcome this difficulty we construct a subsolution $v$ satisfying
\[(-\De)^s v + \frac{c v}{\delta^q(x)} \leq 0 \; \text{in}\; \Om_r \]
where $c>0$ is constant and $\Om_r \subset \Om$. To obtain this, we separately study, by bifurcation arguments, a nonlocal infinite semipositone problem $(I_\theta)$ in section $6$. This leads naturally to a solution of the required problem. In the local setting, we refer to readers \cite{shiv1,shiv2,shiv3} concerning infinite semipositone problems. {But we indicate} that 'nonlocal' infinite semipositone problem has not been studied in the past. So our results are completely new in this regard.  The main result of our paper is accomplished by using a well known critical point theorem by Amann \cite{amann}.  Last but not the least, we additionally prove the uniqueness of solutions  to $(P_\la)$ when $\la$  becomes sufficiently large under appropriate condition of '$f$'. This result is motivated by the paper \cite{CES} of Castro, Eunkyung and Shivaji. Nevertheless, we point out that their approach can not be exactly applied here due to the presence of the nonlocal operator '$(-\De)^s$'. We still succeed to obtain the result by an appropriate application of the Hardy's inequality for fractional Laplacian (refer section 5).
 Now we state the main results of our paper as follows.

\begin{Theorem}\label{3sol}
There exists constants $\la_1,\la_2>0$ such that  if $\la \in [\la_1,\la_2]$ then the problem $(P_\la)$ has at least three solutions in $C_{\phi_{1,s}}^+(\Om)$.
\end{Theorem}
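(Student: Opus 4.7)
The plan is to recast $(P_\la)$ as a fixed--point equation for a compact, strictly increasing operator on a suitable ordered subset of $C_{\phi_{1,s}}^+(\Om)$, and then to apply the three--solutions theorem of Amann \cite{amann} using two ordered pairs of sub--supersolutions provided respectively by the singular/sublinear behavior (f1)--(f2) and by the monotonicity window (f4).

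Adapting the transformation of \cite{Dhanya-JMAA} to the nonlocal setting, I would define $T_\la v = u$, where $u$ is the unique weak solution of the linearly shifted problem
\begin{equation*}
(-\De)^s u + M u = \la\frac{f(v)}{v^q} + M v \;\;\text{in}\;\;\Om, \quad u=0 \;\;\text{in}\;\; \R^n\setminus\Om,
\end{equation*}
with $M>0$ large. Regularity of $(-\De)^s$ in weighted H\"older spaces and the sublinearity in (f2) make $T_\la$ well defined, compact, and strictly increasing on an order interval of $C_{\phi_{1,s}}^+(\Om)$, and its fixed points are exactly the positive solutions of $(P_\la)$. A first sub--supersolution pair $(\uline u_1,\ov u_1)$ is then constructed with $\uline u_1$ a small multiple of $\phi_{1,s}$ (the blow-up of $u^{-q}$ at $0$ combined with (f1) makes $\uline u_1$ a strict subsolution) and $\ov u_1$ a large multiple of the fractional torsion function (a supersolution thanks to (f2)).

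For the three--solution mechanism one needs a second pair $(\uline u_2,\ov u_2)$ with $\uline u_1\le\uline u_2\le\ov u_2$, $\ov u_1\le\ov u_2$, $\uline u_2\not\le\ov u_1$, together with the strict condition that $T_\la$ has no fixed point in the ``gap'' between $\ov u_1$ and $\uline u_2$, so that degree theory produces three distinct fixed points. The main obstacle is the construction of $\uline u_2$: one would like it to take values in the window $(\sigma_1,\sigma_2)$ of (f4) where $u\mapsto f(u)/u^q$ is monotone, but because $(-\De)^s\de^s(x)$ has no explicit closed form (unlike $\De\de$ in the local case of \cite{Dhanya-JMAA}), the standard barrier argument near $\pa\Om$ breaks down. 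To circumvent this I would use a positive solution $v$ of the auxiliary nonlocal infinite semipositone problem $(I_\theta)$ analyzed in Section~6 by bifurcation arguments, which by construction satisfies the key inequality
\begin{equation*}
(-\De)^s v + \frac{c\,v}{\de^q(x)} \le 0 \;\;\text{in}\;\; \Om_r
\end{equation*}
on a boundary collar $\Om_r\subset\Om$ for some $c>0$. Gluing $v$ with an interior profile valued in $(\sigma_1,\sigma_2)$ in the spirit of \cite{KLS} produces $\uline u_2$ with all the required ordering and strictness properties. The simultaneous compatibility of these barrier inequalities selects a non-empty compact range $[\la_1,\la_2]$ on which Amann's theorem, applied to $T_\la$ on the interval $[\uline u_1,\ov u_2]$, delivers at least three distinct fixed points, i.e.\ three positive solutions of $(P_\la)$ in $C_{\phi_{1,s}}^+(\Om)$.
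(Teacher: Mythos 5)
Your high-level strategy (two ordered pairs of sub--supersolutions with a crossing condition, a compact increasing fixed-point operator, and Amann's three-fixed-point theorem) is exactly the paper's. But two steps as you describe them would not go through.

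First, the operator. Your $T_\la v=u$ with $(-\De)^su+Mu=\la f(v)/v^q+Mv$ is \emph{not} the transformation of \cite{Dhanya-JMAA}, and it is not monotone: monotonicity of $T_\la$ requires $t\mapsto \la f(t)/t^q+Mt$ to be nondecreasing, whereas its derivative contains $-\la q f(t)t^{-q-1}\to-\infty$ as $t\to 0^+$ (recall $f(0)>0$), so no finite $M$ repairs this; and every admissible $v$ vanishes on $\pa\Om$, so the bad region cannot be avoided. The paper instead splits $f(u)/u^q=f(0)/u^q+(f(u)-f(0))/u^q$ and keeps the singular, \emph{decreasing} part implicit in the new unknown: $T(u)=z$ solves $(-\De)^sz-\la f(0)/z^q=\tilde f(u)$ with $\tilde f(u)=\la(f(u)-f(0))/u^q\sim \la f'(\xi)u^{1-q}$, which is continuous at $0$ and (after adding $\tilde k u$ if necessary) increasing. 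Monotonicity of $T$ then follows from a comparison argument for the singular operator on the left, and well-definedness is proved by minimizing a strictly convex approximating functional. This is the step your proposal is missing.

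Second, you assign the semipositone problem $(I_\theta)$ the wrong job. In the paper it is \emph{not} used to build the second subsolution; it is used to prove that $T$ is \emph{strongly} increasing, i.e.\ $T(u_2)-T(u_1)\in C_{\phi_{1,s}}^+(\Om)$, by comparing $z_2-z_1$ (which satisfies $(-\De)^s\tilde z+k\tilde z/\de^{s(q+1)}\ge 0$) with a small multiple of the semipositone solution $v$ (which satisfies the reverse inequality in a boundary collar). Without strong increasingness the order intervals $X_1=[\zeta_1,\vartheta_2]$, $X_2=[\zeta_2,\vartheta_1]$ have no interior points and Amann's theorem cannot be applied; your proposal never addresses this, which is precisely where the absence of an explicit formula for $(-\De)^s\de^s$ hurts. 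The second subsolution itself is obtained quite differently and more concretely: one solves $(-\De)^sv_1=\la h(a\chi_R)$ for a truncated nonlinearity $h\le f(u)/u^q$ and uses two-sided Green-function estimates to show $v_1\ge a>\sigma_1$ on $B_R$ and $v_1\le\sigma_2$ precisely when $M_2a/h(a)\le\la\le M_3\sigma_2/h(a)$; this is where the window $[\la_1,\la_2]$ comes from. Your ``gluing'' of $v$ with an interior profile is too vague to certify the subsolution inequality across the interface.
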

\begin{Remark}
Theorem \ref{3sol} still holds if (f3) is replaced by the weakened assumption (f3'): There exists $k>0$ such that $u\mapsto f(u)+ku$ is non decreasing in $\R^+$.
\end{Remark}
\begin{Theorem}\label{unique-sol}
There exist a $\lambda^*>0$ such that $(P_\la)$ has a unique solution when $\la > \la^*$.
\end{Theorem}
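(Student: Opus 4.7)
The plan is to adapt the Castro--Eunkyung--Shivaji uniqueness strategy \cite{CES} to the nonlocal setting, with the fractional Hardy inequality (developed in Section~5) replacing its local counterpart. Set $F(u)=f(u)/u^q$ and let $\psi\in C^s(\overline{\Om})$ denote the fractional torsion function, i.e.\ the solution of $(-\De)^s\psi=1$ in $\Om$ with $\psi\equiv 0$ in $\R^n\setminus\Om$; recall $\psi\asymp\de^s$. The key preliminary input is a two-sided, $\la$-explicit bound valid for every solution $u_\la\in C^+_{\phi_{1,s}}(\Om)$ of $(P_\la)$:
\[
c_1\la^{1/(q+1)}\de^s(x)\;\leq\; u_\la(x)\;\leq\; c_2\la^{1/(q+1)}\de^s(x),\qquad x\in\Om.
\]
For the lower bound one checks that for $c_0$ small, $v:=c_0\la^{1/(q+1)}\psi$ satisfies $(-\De)^s v=c_0\la^{1/(q+1)}\leq \la f(0)/v^q\leq \la F(v)$ and is a weak sub-solution; the matching upper bound of the same order follows by a Green-function bootstrap using the lower bound.

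Given two solutions $u_1,u_2$ and $w=u_1-u_2\in H^s_0(\Om)$, testing the two equations with $w$ and subtracting yields
\[
[w]_s^2\;=\;\la\int_\Om\bigl(F(u_1)-F(u_2)\bigr)(u_1-u_2)\,dx.
\]
Introduce the $\la$-dependent splitting $\Om=\Om_\la^{in}\cup \Om_\la^{bd}$ with $\Om_\la^{bd}=\{\de<\de_\la\}$ and $\de_\la:=c_3\la^{-1/(s(q+1))}$. Under the appropriate hypothesis on $f$, of the form $u f'(u)\leq q f(u)$ for $u$ large (equivalently $F'\leq 0$ on some $[M,\infty)$), the lower bound above forces $u_1,u_2\geq M$ on $\Om_\la^{in}$ as soon as $\la$ is large, so $(F(u_1)-F(u_2))(u_1-u_2)\leq 0$ pointwise there and the interior contribution is nonpositive. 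On $\Om_\la^{bd}$ the upper bound gives $u_1,u_2\leq c_2 c_3^s=:M'$ (note $M'$ is $\la$-independent), so $f,f'$ are bounded there by constants depending only on $M'$, and the mean value theorem yields $|F(u_1)-F(u_2)|\leq C\xi^{-(q+1)}|w|$ for some $\xi$ between $u_1,u_2$. Combining this with $\xi\geq c_1\la^{1/(q+1)}\de^s$ exactly absorbs the prefactor $\la$:
\[
\la\,|F(u_1)-F(u_2)|\,|w|\;\leq\;\frac{C_1}{\de^{s(q+1)}}\,w^2\qquad\text{on }\Om_\la^{bd}.
\]
Since $q<1$, we factor $\de^{-s(q+1)}=\de^{s(1-q)}\de^{-2s}\leq \de_\la^{s(1-q)}\de^{-2s}$ on $\Om_\la^{bd}$, and the fractional Hardy inequality $\int_\Om w^2/\de^{2s}\,dx\leq C_H[w]_s^2$ produces
\[
\la\int_{\Om_\la^{bd}}\bigl(F(u_1)-F(u_2)\bigr)w\,dx\;\leq\; C_HC_1\de_\la^{s(1-q)}[w]_s^2\;=\;C_2\,\la^{-(1-q)/(q+1)}\,[w]_s^2.
\]
Together with the nonpositive interior contribution this gives $[w]_s^2\leq C_2\la^{-(1-q)/(q+1)}[w]_s^2$; for $\la\geq\la^*$ large enough the prefactor is $<1$, hence $w\equiv 0$.

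The hard part is the boundary-layer estimate, which rests on three matching scales: (i) the $\la$-sharp lower bound $u_\la\gtrsim \la^{1/(q+1)}\de^s$ must precisely cancel the explicit factor of $\la$ appearing in the energy identity; (ii) the matching upper bound of the same order is needed so that $u_i$ stays bounded on $\Om_\la^{bd}$ uniformly in $\la$, which is what allows one to treat $f,f'$ as bounded; (iii) the admissible range $q\in(0,1)$ is exactly what makes the induced singular exponent $s(q+1)$ strictly less than the fractional Hardy exponent $2s$, producing the prefactor $\de_\la^{s(1-q)}\sim\la^{-(1-q)/(q+1)}$ that decays to zero as $\la\to\infty$. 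Without any one of these three, the method breaks down.
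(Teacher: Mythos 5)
Your overall skeleton matches the paper's proof of Theorem~\ref{unique-sol}: the $\la$-sharp lower bound $u_\la\gtrsim \la^{1/(1+q)}\de^s$ (the paper gets it as $u_\la\geq \Theta_\la w$ with $\Theta_\la=(\la f(0))^{1/(1+q)}$, via comparison with the pure singular problem \eqref{psp} rather than the torsion function), the correctly guessed extra hypothesis (f5) that $f(u)/u^q$ is decreasing for $u>\alpha$, the sign splitting of the energy identity, the fractional Hardy inequality, and the final decay rate $O(\la^{-(1-q)/(1+q)})$ all coincide. However, there is a genuine gap in your boundary-layer step: the claimed matching upper bound $u_\la\leq c_2\la^{1/(q+1)}\de^s$ is false under the hypotheses (f1)--(f3), (f5). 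From (f5) one only gets $f(u)/u^q\leq f(\alpha)/\alpha^q$ for $u\geq\alpha$, so the Green-function bootstrap yields $(-\De)^s u_\la\leq C\la + C'\la^{1/(q+1)}\de^{-sq}$ and hence $u_\la= O(\la\,\de^s)$, not $O(\la^{1/(q+1)}\de^s)$; the example $f(u)=1+u^q$ (which satisfies all the hypotheses) shows this is sharp. Consequently on your layer $\{\de<c_3\la^{-1/(s(q+1))}\}$ the solutions can be as large as $\la^{q/(q+1)}\to\infty$, so $f$ and $f'$ are \emph{not} uniformly bounded there, and the estimate $|F(u_1)-F(u_2)|\leq C\xi^{-(q+1)}|w|$ with a $\la$-independent $C$ does not follow as you argue. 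Since you explicitly flag ingredient (ii) as indispensable (``without any one of these three, the method breaks down''), this is a real hole, not a cosmetic one.

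The fix — and it is what the paper does — is to define the bad region intrinsically rather than extrinsically: take $u_1$ to be the \emph{minimal} solution (Corollary~\ref{min-sol}), write the right-hand side as $\la\int_\Om f_1(x)(u_1-u_2)^2$ with $f_1=\int_0^1 F'(u_1+t(u_2-u_1))\,\mathrm{d}t$, and keep only $\Om_0:=\{f_1\geq 0\}$. By (f5), $f_1(x)\geq 0$ forces $u_1(x)\leq\alpha$, which (a) places $\Om_0$ inside a boundary layer of width $\de^s\lesssim \alpha\,\la^{-1/(1+q)}$ by the lower bound alone, and (b) gives $[F'(v)]^+\leq (\max_{[0,\alpha]}f')\,v^{-q}\leq C u_1^{-q}$ there without any upper bound on $u_2$, since $[F'(v)]^+=0$ for $v>\alpha$ and $f'$ is bounded on $[0,\alpha]$. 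Feeding this into your Hardy computation reproduces exactly the prefactor $O(\la^{-(1-q)/(1+q)})$. So your mechanism is the right one, but the quantitative two-sided bound you rest it on must be replaced by this one-sided, set-theoretic localization.
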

\begin{Remark}
Since $f(0)>0$, it is not difficult to show that for $\lambda>0$ small enough, there exists a unique solution with small norm. Then Theorem~\ref{3sol} and \ref{unique-sol} entail that the bifurcation curve of solutions to $(P_\la)$ emanating from $(0,0)$ is S-shaped.
\end{Remark}
The outline of this paper is as follows: In the second Section we give some useful preliminaries about the main equation in $(P_\lambda)$. In the third Section, we construct the sub and supersolutions used to apply the fixed point theorem of Amann. In Section 4, we prove the main result of our paper: Theorem~\ref{3sol}. In Section 5, we prove our main uniqueness result: Theorem~\ref{unique-sol} and finally in Section 6, we investigate the fractional and singular semipositone problem $(I_\theta)$ used crucially for proving the strong increasingness of the operator $T$ in Section 4.
\section{Preliminaries}
We start with defining the function spaces. Given any $\phi \in C_0(\overline \Om)$ such that $\phi >0$ in $\Om$ we define
\[C_{\phi}(\Om):= \{u \in C_0(\Om)| \;\exists \;c\geq0 \;\text{such that}\; |u(x)|\leq c\phi(x), \; \forall x \in \Om \}\]
with the usual norm $\displaystyle \left\|\frac{u}{\phi}\right\|_{L^\infty(\Om)}$ and the associated positive cone. We define the following open convex subset of $C_{\phi}(\Om)$ as
\[C_{\phi}^+(\Om):= \left\{ u \in C_{\phi}(\Om)|\; \inf_{x\in \Om}\frac{u(x)}{\phi(x)}>0 \right\}.\]

In particular, $C_{\phi}^+$ contains all those functions $u \in C_0(\Om)$ with $k_1\phi \leq u\leq k_2 \phi$ in $\Om$ for some $k_1,k_2>0$. We consider the following fractional Sobolev space
\[\tilde H^s(\Om):= \{u \in H^s(\mb R^n): \; u = 0 \; \text{in}\; \mb R^n \setminus \Om\}\]
equipped with the norm
\[\|u\|= \left(\int_Q \frac{|u(x)-u(y)|^2}{|x-y|^{n+2s}}~\mathrm{d}x\mathrm{d}y\right)^{\frac12},\; \text{where}\; Q= \mb R^{2n}\setminus (\mc C\Om\times\mc C\Om).\]
\begin{Definition}
We say that $u \in \tilde H^s(\Om)$ is a weak solution to $(P_\la)$ if $\inf\limits_K u>0$ for every compact subset $K \subset \Om$ and for any $\varphi \in \tilde H^s(\Om)$,
\begin{equation}\label{weaksol}
\int_{\Om}(-\De)^su\varphi = C^n_s\int_Q\frac{(u(x)-u(y))(\varphi(x)-\varphi(y))}{|x-y|^{n+2s}}\mathrm{d}x\mathrm{d}y= \la\int_{\Om} \frac{f(u)}{u^q}\varphi\mathrm{d}x.
\end{equation}
\end{Definition}
\begin{Definition}
By a subsolution of the problem $(P_\la)$, we mean a function $v \in \tilde H^s(\Om)$ which satisfies (weakly)
\begin{equation}\label{subsol}
(-\De)^s v \leq \la \frac{f(v)}{v^q},\; v>0 \; \text{in}\; \Om,\;\;v =0 \;\text{in}\; \mb R^n \setminus \Om.
\end{equation}
Whereas if the reverse inequality holds in \eqref{subsol}, we call $v$ to be a supersolution of $(P_{\la})$. Also we call them strict sub and supersolution if the inequality in \eqref{subsol} is strict.
\end{Definition}
We define the distance function as $\delta(x):= \text{dist}(x,\partial \Om)$, $x \in \Om$. Let $\phi_{1,s}$ denotes the first positive eigenfunction of $(-\De)^s$ in $\tilde H^s(\Om)$ corresponding to its principal eigenvalue $\la_{1,s}$ such that $\|\phi_{1,s}\|_{L^\infty(\Om)}=1$. We recall that $\phi_{1,,s}\in C^s(\mb R^n)$ and also $\phi_{1,s}\in C_{\delta^s}^+(\Om)$ (see for instance Proposition $1.1$ and Theorem $1.2$ of \cite{RoSe}).

\section{Sub and Supersolutions of $(P_\la)$}
In this section we show the existence of two pairs of sub-supersolutions $(\zeta_1,\vartheta_1)$ and $(\zeta_2,\vartheta_2)$ such that $\zeta_1\leq \zeta_2\leq \vartheta_1$, $\zeta_1\leq \vartheta_2 \leq \vartheta_1$ and $\zeta_2\nleqslant \vartheta_2$. Moreover it holds that $\zeta_2,\vartheta_2$ are strict sub and supersolutions of $(P_\la)$. Let $w$ denotes the unique solution of the problem
\begin{equation}\label{psp}
(-\De)^sw= \frac{1}{w^q},\; w>0\; \text{in}\; \Om,\; \; w=0\;\text{in}\; \mb R^n\setminus \Om.
\end{equation}
Then from the proof of Theorem $1.1$ of \cite{AJS}, we know that $w \in \tilde H^s(\Om)\cap C_{\phi_{1,s}}^+(\Om)$ and $w \in C^s(\mb R^n)$. We construct our supersolution $\vartheta_1$ first. Since (f2) holds, we get $\lim\limits_{u \to \infty}\frac{ f(u)}{u^{q+1}}=0$. This implies that if we choose a constant $M_\la \gg 1$ sufficiently large such that
\[\frac{ f(M_\la\|w\|_{L^\infty(\Om)})}{(M_\la\|w\|_{L^\infty{(\Om)}})^{q+1}}\leq \frac{1}{\la \|w\|_{L^\infty(\Om)}^{q+1}}\;\;\text{i.e}\;\; M_\la^{q+1} \geq \la f(M_\la\|w\|_{L^\infty(\Om)})
\]
then $\vartheta_1= M_\la w \in \tilde H^s(\Om)\cap C_{\phi_{1,s}}^+(\Om)$ forms a supersolution of $(P_\la)$. Indeed using non decreasing nature of $f$ we get
\begin{align*}
(-\De)^s \vartheta_1= \frac{M_\la^{q+1}}{(M_\la w)^q}\geq \la\frac{ f(M_\la\|w\|_{L^\infty(\Om)})}{(M_\la w)^q} \geq \la \frac{  f(M_\la w)}{(M_\la w)^q} =\la \frac{  f(\vartheta_1)}{(\vartheta_1)^q}.
\end{align*}
Now since $\lim\limits_{u \to 0} \frac{f(u)}{u^{q}}=\infty$, we can choose $m_\la>0$ sufficiently small so that
\[\la_{1,s}m_\la \phi_{1,s} \leq \la \frac{f(m_\la\phi_{1,s})}{(m_\la \phi_{1,s})^q},\; \; \text{for each}\; \la >0. \]
Now we define $\zeta_1 = m_\la \phi_{1,s} \in \tilde H^s(\Om)\cap C_{\phi_{1,s}}^+(\Om)$ and it is easy to see that
\[(-\De)^s \zeta_1= m_\la \la_{1,s} \phi_{1,s}\leq \la\frac{f(m_\la \phi_{1,s})}{(m_\la \phi_{1,s})^q} = \la \frac{f(\zeta_1)}{\zeta_1^q} .  \]
Therefore $\zeta_1$ is a subsolution of $(P_{\la})$. It is not hard to see that we always choose $m_\la$ small enough so that $\zeta_1 \leq \vartheta_1$. This completes our construction of first pair of sub-supersolution.

\noi Our next step is to construct the second pair of sub-supersolution of $(P_\la)$. We first construct our positive supersolution $\vartheta_2$ such that $\|\vartheta_2\|_{L^\infty(\Om)}= \sigma_1$(see (f4)). Let us define $\vartheta_2 = \frac{\sigma_1 w}{\|w\|_{L^\infty(\Om)}}\in \tilde H^s(\Om)\cap C_{\phi_{1,s}}^+(\Om)$ and assume that
\[0 < \la \leq \frac{\sigma_1^{q+1}}{f(\sigma_1)\|w\|_{L^\infty(\Om)}^{q+1}}.\]
Then using the non decreasing nature of $f$ we find that it satisfies
\begin{align*}
(-\De)^s\vartheta_2 = \frac{\sigma_1}{\|w\|_{L^\infty(\Om)}w^q}\geq \la \frac{f(\sigma_1)\|w\|_{L^\infty(\Om)}^q}{(\sigma_1 w)^q}\geq \la \frac{f\left(\frac{\sigma_1 w}{\|w\|_{L^\infty(\Om)}}\right)}{\left(\frac{\sigma_1 w}{\|w\|_{L^\infty(\Om)}}\right)^q}= \la \frac{f(\vartheta_2)}{\vartheta_2^q}.
\end{align*}
Now we construct our second positive supersolution of $(P_{\la})$ which is one of the crucial part of our paper. For this, we let $\sigma \in (0,\sigma_1]$ be such that $ f^*(\sigma)= \min\limits_{0<x\leq \sigma} \displaystyle\frac{f(x)}{x^q}$ and also define $h \in C([0,\infty))$ such that
\begin{equation*}
h(u)=\left\{
\begin{split}
{f^*(\sigma)}&,\; \text{if}\; u\leq \sigma\\
\frac{f(u)}{u^q}&,\; \text{if}\; u\geq\sigma_1
\end{split}
\right.
\end{equation*}
so that $h$ is a non decreasing function on $(0,\sigma_1]$ and $h(u) \leq \frac{f(u)}{u^q}$ for all $u \geq 0$. With this definition of $h$, we consider a non singular problem
\begin{equation*}
(Q_\la)\left\{
\begin{split}
(-\De)^su=\la h(u)\; \text{in}\; \Om,\;\; u=0 \; \text{in}\; \mb R^n \setminus \Om.
\end{split}
\right.
\end{equation*}
Let $G_s(x,y)$ denote the Green function associated to $(-\De)^s$ with homogeneous Dirichlet boundary condition in $\Om$.  Then we have
\begin{equation*}
u(x)=\left\{
\begin{split}
\la \int_{\Om} G_s(x,y)h(y) ~\mathrm{d}y &,\; \text{if}\; x \in \Om,\\
0&,\; \text{if}\; x \in \mb R^n \setminus \Om.
\end{split}
\right.
\end{equation*}
Let $B_{\hat R}(0)$ denotes the ball (centered at $0$ where w.l.o.g. we assume that $0 \in \Om$) of largest radius $\hat R$ that is inscribed in $\Om$ and also let $R < \hat R$.
% Now we consider the problem $(Q_\la)^\prime$ on $B_R(0)$ where $R < \hat R$ as follows
%\begin{equation*}
%(Q_\la)^\prime\left\{
%\begin{split}
%(-\De)^su=\la h(u)\; \text{in}\; B_R(0),\;\; u=0 \; \text{in}\; \mb R^n \setminus B_R(0).
%\end{split}
%\right.
%\end{equation*}
Suppose $K: L^2(\Om) \to L^2(\Om)$ be the linear map defined as
\[K(g)(x) = \int_{\Om}G_s(x,y)g(y)~\mathrm{d}y.\]
%Then $u=K(g)$ in $\Om$ and $u=0$ in $\mb R^n \setminus \Om$ satisfies the PDE
%\[(-\De)^su = g\; \text{in}\; B_{R_1}(0),\;\; u =0 \; \text{in} \; \mb R^n \setminus B_{R_1}(0). \]
Let $\chi_{R}: \Om \to \mb R$ be the characteristic function defined as
\begin{equation*}
\chi_{R}(x)=\left\{
\begin{split}
1 &,\; \text{if}\; x \in B_{R}(0),\\
0&,\; \text{if}\; x \in \Om \setminus B_R(0).
\end{split}
\right.
\end{equation*}
Then from Theorem $1.1$ of \cite{Chen-song}, for each $(x,y) \in \Om \times \Om$ we have that
\begin{equation}\label{esti-on-G}
0\leq G_s(x,y) \leq C \min \left\{\frac{\delta^s(x)\delta^s(y)}{|x-y|^n}, \frac{\delta^s(x)}{|x-y|^{n-s}}\right\}.
\end{equation}
Therefore there exists a constant $C_1>0$ depending in $R$ such that
\[K(\chi_{R})(x)= \int_{\Om} G_s(x,y)~\mathrm{d}y\leq C \delta^s(x)\int_{B_R(0)}\frac{\mathrm{d}y}{|x-y|^{n-s}}\leq  C_1 \]
uniformly in $x \in \Om$. Let $M_2 = \left(\min\limits_{x \in \Om} K(\chi_{R})(x)\right)^{-1}>0$ and $a \in (\sigma_1,\sigma_2]$. Also we define $v= a \chi_{{R}}$ in $\Om$. Then if $v_1$ denotes a solution to the problem
\begin{equation}\label{def-v1}
(-\De)^s v_1 = h(v)\; \text{in}\; \Om, \;\; v_1=0 \; \text{in}\; \mb R^n\setminus \Om,
\end{equation}
then for each $x \in \Om$ we get
\[v_1(x) = \la \int_{\Om} G_s(x,y)h(a\chi_{R})(y)~\mathrm{d}y \leq \la h(a) \int_{\Om} G_s(x,y)~\mathrm{d}y  \]
where we used the fact that $h$ is non decreasing and $\chi_{R}\leq 1$ in $\Om$. Therefore $v_1 \leq \sigma_2$ in $B_R(0)$ if
\[\la \leq \frac{M_3 \sigma_2}{h(a)}, \; \text{where}\; M_3 = \left(\max\limits_{x \in \Om} \int_{\Om} G_s(x,y)~\mathrm{d}y  \right)^{-1}<+\infty.\]
\textbf{Claim:} $v_1\geq v$ in $\Om$ for certain range of $\la$.\\
Let $x \in \Om\setminus B_{R}(0)$ then since $G_s(x,y)\geq 0$ for all $x,y \in \Om$ and $h(u)>0$ for all $u \geq 0$ we get $v(x)=0 \leq v_1(x)$. Now let $ x\in B_{R}(0)$ then
\begin{align*}
v_1(x) &= \la \int_{\Om} G_s(x,y)h(a\chi_{R})(y)~\mathrm{d}y \\
&= \la\left( \int_{B_{R}(0)} G_s(x,y)h(a)~\mathrm{d}y+ \int_{\Om \setminus B_{R}(0)} G_s(x,y)h(0)~\mathrm{d}y\right)\\
& \geq \la h(a)\int_{B_{R}(0)} G_s(x,y)~\mathrm{d}y, \; \text{since}\; h(0)= \frac{f(\sigma)}{\sigma^q}>0 \; \text{and}\; G_s(x,y)\geq 0\\
& \geq \la \frac{h(a)}{M_2}.
\end{align*}
So if we assume that $\la \geq \displaystyle \frac{M_2 a}{h(a)}$ then by definition of $v$ we get
\begin{equation}\label{sub-sol-pos}
v_1(x) \geq a \geq v(x), \; \text{forall}\; x \in \Om.
\end{equation}
Hence we finally get that $0\leq v\leq v_1 \leq \sigma_2$ in $\Om$ if
\[\frac{M_2a}{h(a)} \leq \la \leq \frac{M_3 \sigma_2}{h(a)}.\]
Since we assumed $h$ to be non decreasing in $(0,\sigma_2)$ (because of (f4)) we get $h(v(x))\leq h(v_1(x))$, for $x \in \Om$. So $v_1$ weakly satisfies the problem
\[(-\De)^sv_1 \leq \la h(v_1)\;\text{in}\; \Om, \;\; v_1=0\;\text{in}\; \mb R^n \setminus \Om\]
Since $h(v) \in L^\infty(\Om)$, using Theorem $1.2$ of \cite{RoSe} we get that $v_1\in C^s(\mb R^n)$. This gives us that $v_1$ forms a subsolution of $(Q_\la)$. Moreover \eqref{def-v1} and the strong maximum principle implies that $v_1>0$ in $\Om$. Therefore, using the fact that $h(u) \leq \displaystyle \frac{f(u)}{u^q}$ for all $u \geq 0$ implies that $\zeta_2 =v_1$ forms a positive subsolution of $(P_{\la})$.
%However we can not say that $v_1$ is strictly positive in $\Om$.
% So we choose a constant $\rho_\la>0$ such that $\la h(v_1)+ v_1\rho_\la$ is increasing for all $v_1 \geq 0$. We define $\hat h(v_1) = \la h(v_1)+ v_1\rho_\la$ and assume $\zeta_2$ to be the solution of the problem
%\[(-\De)^s \zeta_2 + \rho_2 \zeta_2 = \hat h(v_1) \; \text{in}\; \Om,\;\; \zeta_2 =0\; \text{in}\; \mb R^n \setminus \Om.\]
%It is easy to prove a weak comparison principle for the operator $(-\De)^s + \rho I$ (I denotes the identity operator) and then we finally get $\zeta_2 \geq v_1$. But since $\hat h(0)>0$ we get $\zeta_2>0$ in $\Om$. Thus using increasing nature of $\hat h$ we obtain
%\[(-\De)^s \zeta_2 + \rho_2 \zeta_2 = \hat h(v_1) \leq \hat h(\zeta_2) = \la h(\zeta_2)+ \rho_\la \zeta_2. \]
%This clearly implies that $\zeta_2$ is a subsolution of $(Q_\la)$ and $\zeta_2>0$ in $\Om$. But $h(u) \leq \displaystyle \frac{f(u)}{u^q}$ for all $u \geq 0$ implies that infact, $\zeta_2$ forms a positive subsolution of $(P_{\la})$.

\noi Therefore we got that if
\[\la_1:= \frac{M_2a}{h(a)} \leq \la \leq \min\left\{\frac{\sigma_1^{q+1}}{f(\sigma_1)\|w\|^{q+1}_{L^\infty(\Om)}} , \frac{M_3\sigma_2}{h(a)} \right\}=:\la_2\]
then  we obtain a positive subsolution $\zeta_2$ and a positive supersolution $\vartheta_2$ of $(P_\la)$ such that $\zeta_2 \nleq \vartheta_2$. Indeed, $\|\vartheta_2\|_{L^\infty(\Om)}=\sigma_1$ and $\|\zeta_2\|_{L^\infty(\Om)} \geq a >\sigma_1$.

\section{Proof of main result}
In this section we prove our main result after establishing some necessary results. We begin by noticing that our problem $(P_\la)$ can be rewritten as
\begin{equation*}
(\tilde P_\la)\left\{
\begin{split}
(-\De)^su-\la \frac{f(0)}{u^q} = \tilde f(u),\; u>0\; \text{in}\;\Om,\;\; u = 0 \; \text{in}\; \mb R^n \setminus \Om
\end{split}
\right.
\end{equation*}
where $\tilde f(u)=\la\left( \displaystyle\frac{f(u)-f(0)}{u^q}\right)$. We fix that $\la \in [\la_1,\la_2]$. Since $f\in C^1([0,\infty))$ , by Mean value theorem we get $\tilde f(u)=\la f^\prime(v)u^{1-q}$ for some $v \in (0,u)$. Also this implies $\tilde f(0)=0$ because $\lim\limits_{t\to 0}|f^\prime (t)|<\infty$ and $q\in (0,1)$. Therefore $\tilde f$ can be considered as a continuous function on $[0,\infty)$ such that $\tilde f(0)=0$. We assume also the following-\\
(h1) There exists a constant $\tilde k>0$ such that $\tilde f(t)+\tilde k t$ is increasing in $[0,\infty)$.\\
\begin{Definition}
We say that $z \in \tilde H^s(\Om)$ is a weak solution of $(\tilde P_\la)$  if $\inf\limits_K z>0$ for every compact subset $K \subset \Om$ and for any $\varphi \in C_c^\infty(\Om)$,
\begin{equation}\label{weaksol1}
 C^n_s\int_Q\frac{(z(x)-z(y))(\varphi(x)-\varphi(y))}{|x-y|^{n+2s}}~\mathrm{d}x\mathrm{d}y-\la f(0)\int_{\Om}\frac{\varphi}{z^q}~\mathrm{d}x = \int_{\Om} \tilde f(z)\varphi ~\mathrm{d}x.
\end{equation}
\end{Definition}
We remark that for any $\varphi \in \tilde H^s(\Om)$ and $z(x) \geq k_1 \delta^s(x)$ in $\Om$, Hardy's inequality gives that
\[\left|\int_\Om \frac{\varphi}{z^q}~\mathrm{d}x\right| \leq k_1 \int_\Om \frac{|\varphi(x)|}{\delta^s(x)} \delta^{s(1-q)}(x) \mathrm{d}x\leq k_1 \left(\int_\Om \frac{|\varphi(x)|^2}{\delta^{2s}(x)}\right)^{\frac12} \left( \int_\Om \delta^{2s(1-q)}(x)\mathrm{d}x\right)^{\frac12}\leq C\|\varphi\|\]
where $k_1,C>0$ are constants. So now following the arguments of Lemma $3.2$ of \cite{TJS} we can prove that a weak solution $z \in \tilde H^s(\Om)$ of $(\tilde P_\la)$ satisfies \eqref{weaksol1} for every $\varphi \in \tilde H^s(\Om)$ if $z \geq k_1 \delta^s(x)$ in $\Om$.

We extend the function $f$ and $\tilde f$ naturally as $f(t)=f(0)$ and $\tilde f(t)= \tilde f(0)$ for $t \leq 0$. Because of the assumption in (h1), w.l.o.g.  we can assume that $\tilde f$ is increasing in $\mb R^+$.
% because otherwise we can study the problem
%\begin{equation*}
%\begin{split}
%(-\De)^su-\la \frac{f(0)}{u^q}+ \tilde k u = \tilde f(u),\; u>0\; \text{in}\;\Om,\;\; u = 0 \; \text{in}\; \mb R^n \setminus 0
%\end{split}
%\end{equation*}
Now we define a map $T: C_0(\overline \Om)\to C_{\phi_{1,s}}(\Om)$ as $T(u)=z$ if and only if $z$ is a weak solution of
\begin{equation*}
(S_\la)\left\{
\begin{split}
(-\De)^sz-\la \frac{f(0)}{z^q} = \tilde f(u),\; z>0\; \text{in}\;\Om,\;\; z = 0 \; \text{in}\; \mb R^n \setminus \Om.
\end{split}
\right.
\end{equation*}
By saying that $z\in \tilde H^s(\Om)$ is a weak solution of $(S_\la)$, we mean that it satisfies
\begin{equation}\label{weaksol2}
 C^n_s\int_Q\frac{(z(x)-z(y))(\varphi(x)-\varphi(y))}{|x-y|^{n+2s}}\mathrm{d}x\mathrm{d}y-\la f(0)\int_{\Om}\frac{\varphi}{z^q}\mathrm{d}x = \int_{\Om} \tilde f(u)\varphi\mathrm{d}x.
\end{equation}
for all $\varphi \in C_c^\infty(\Om)$. But repeating the arguments as above for $(\tilde P_\la)$ we can show that $z \in \tilde H^s(\Om)$ of $(S_\la)$ satisfies \eqref{weaksol2} for every $\varphi \in \tilde H^s(\Om)$ if $z \geq k_1 \delta^s(x)$ in $\Om$.

\begin{Proposition}\label{fixed-pt}
A function $z\in \tilde H^s(\Om)\cap C_{\phi_{1,s}}(\Om)$ is a weak solution of $(S_\la)$ if and only if $z$ is a fixed point of $T$.
\end{Proposition}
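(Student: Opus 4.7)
The content of this proposition is really two-fold: first, that the map $T$ is genuinely well-defined as a map $C_0(\overline{\Om}) \to \tilde H^s(\Om) \cap C_{\phi_{1,s}}^+(\Om)$; and second, that once this is in place the equivalence is essentially tautological, since $(S_\la)$ with $u=z$ is literally $(\tilde P_\la)$. I would treat these two points in turn.

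For well-definedness, fix $u \in C_0(\overline{\Om})$ and observe that $\tilde f(u) \in L^\infty(\Om)$ (since $u$ is bounded and $\tilde f$ is continuous with $\tilde f(0)=0$), so that $(S_\la)$ is the purely singular problem \eqref{psp} (with coefficient $\la f(0)$) perturbed by a bounded forcing. I would obtain existence and uniqueness by minimizing the strictly convex, coercive functional on $\tilde H^s(\Om)$,
\[J(z) = \tfrac12\|z\|^2 - \tfrac{\la f(0)}{1-q}\int_\Om z_+^{1-q}\,\mathrm{d}x - \int_\Om \tilde f(u)\, z_+\,\mathrm{d}x,\]
coercivity being immediate since $1-q \in (0,1)$ and $\tilde f(u)$ is bounded, and strict convexity following from the strict concavity of $s \mapsto s^{1-q}$ on $[0,\infty)$. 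The minimizer $z$ is nonnegative (by replacing $z$ with $|z|$, which reduces $J$) and strictly positive in $\Om$ by the strong maximum principle for $(-\De)^s$. The regularity $z \in C_{\phi_{1,s}}^+(\Om)$ then follows from a two-sided comparison with the reference solution $w$ of \eqref{psp}: for suitable constants $c_1,c_2>0$ depending on $\la$, $f(0)$ and $\|\tilde f(u)\|_\infty$, the functions $c_1 w$ and $c_2 w$ are respectively a sub- and a supersolution of $(S_\la)$, so weak comparison combined with the membership $w \in C_{\phi_{1,s}}^+(\Om)$ (from \cite{AJS}) transfers the boundary behavior to $z$. One may alternatively invoke directly the existence-uniqueness-regularity theory of \cite{AJS} applied to $(S_\la)$.

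With $T$ well-defined, the equivalence is now immediate. If $T(z) = z$, then by the very definition of $T$, the function $z$ weakly satisfies $(S_\la)$ with $u$ replaced by $z$; unwinding $\tilde f(z) = \la (f(z)-f(0))/z^q$ turns this into the equation $(\tilde P_\la)$, which is the reformulation of $(P_\la)$. Conversely, if $z \in \tilde H^s(\Om) \cap C_{\phi_{1,s}}(\Om)$ weakly solves $(\tilde P_\la)$, then the pair $(z, u:=z)$ weakly satisfies $(S_\la)$, and the uniqueness from the previous step forces $T(z) = z$. The main obstacle, and really the only non-trivial content of the proof, is the well-definedness step: in particular securing the fine boundary behavior $z \in C_{\phi_{1,s}}(\Om)$ through the two-sided barrier argument, which relies on the boundary regularity results of \cite{RoSe} and the singular-problem theory of \cite{AJS}. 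Everything else reduces to bookkeeping once this point is in hand.
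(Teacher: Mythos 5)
Your overall architecture is right and the ``equivalence'' half of your argument is exactly what the paper does: once $T$ is known to be single-valued into $C_{\phi_{1,s}}^+(\Om)$, the forward implication is uniqueness of the solution of $(S_\la)$ with datum $\tilde f(z)$, and the converse is the boundary regularity of the fixed point (which the paper gets by quoting Theorem 1.2 of \cite{AJS} for the locally H\"older datum $\tilde f(z)$, while you get it from the two-sided barrier $c_1w\le z\le c_2w$; both are legitimate, and your barriers are essentially the $m\phi_{1,s}$ and $Mw$ used in the paper's Proposition \ref{well-defined}). Note that the paper separates the well-definedness of $T$ into Proposition \ref{well-defined}; you have folded it into this proposition, which is fine, but it is there that your argument has a real gap.

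The gap is in passing from ``minimizer of $J$'' to ``weak solution of $(S_\la)$''. Because of the singular term, $J$ is not G\^ateaux differentiable at a generic nonnegative $z$: the candidate derivative contains $\int_\Om \varphi\, z^{-q}$, which is only known to be finite (via Hardy's inequality) \emph{after} one has the lower bound $z\ge c\,\delta^s$. You propose to obtain that lower bound by ``weak comparison'' of $z$ with the subsolution $c_1w$, but weak comparison presupposes that $z$ is already a weak supersolution of $(S_\la)$ --- which is precisely what you are trying to prove. As written the argument is circular. The paper breaks this circle in two standard ways you should adopt one of: either regularize (the functional $E_\e$ with $(z+\e)^{1-q}$ is differentiable, the bound $z_\e\ge m\phi_{1,s}$ is proved by the one-sided derivative/convexity argument along the segment $t\mapsto z_\e+t(m\phi_{1,s}-z_\e)^+$, and one passes to the limit $\e\downarrow 0$ by dominated convergence), or run that same one-sided derivative argument directly on $J$ at $t>0$, where $(1-t)z+t c_1 w\ge t c_1 w$ on the relevant set makes the singular integral finite. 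A second, smaller point: $J$ is strictly convex only on the positive cone (the map $z\mapsto z_+^{1-q}$ is not concave across $0$), and in any case uniqueness of \emph{minimizers} is weaker than uniqueness of \emph{weak solutions}, which is what the implication ``$z$ solves $(S_\la)$ with $u=z$ $\Rightarrow$ $T(z)=z$'' actually requires; that uniqueness comes from the comparison argument of Lemma \ref{mi} (testing the difference of two solutions with its negative part), valid in the class $z\ge k_1\delta^s$, and should be stated rather than attributed to convexity.
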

\begin{proof}
Suppose $z\in \tilde H^s(\Om)\cap C_{\phi_{1,s}}(\Om)$ is a weak solution of $(S_\la)$ then it is clear that $z$ forms a fixed point of the map $T$. Conversely assume $T(z)=z$ for some $z\in C_0(\overline \Om)$. Then it satisfies \eqref{weaksol2} but it remains to show that $z \in C_{\phi_{1,s}}^+(\Om)$. Since $z>0$ in $\Om$ and $\tilde f(z)$ is locally H$\ddot{\text{o}}$lder continuous in $\Om$, we can follow the arguments of Theorem $1.2$ of \cite{AJS} to obtain that $z \in C_{\phi_{1,s}}^+(\Om)$.
 \hfill{\QED}
\end{proof}

\begin{Proposition}\label{well-defined}
The map $T$ is well defined from $C_0(\overline \Om)$ to $C_{\phi_{1,s}}^+(\Om)$.
\end{Proposition}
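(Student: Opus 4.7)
Fix $u \in C_0(\overline\Om)$. Since $\tilde f$ is continuous with $\tilde f(0)=0$ and $u$ is bounded, $\tilde f(u) \in L^\infty(\Om)$. The plan is to show that $(S_\la)$ admits a unique weak solution $z \in \tilde H^s(\Om)$ and that this solution lies in $C_{\phi_{1,s}}^+(\Om)$. The key is to build ordered sub- and supersolutions of $(S_\la)$ directly from the unique solution $w$ of the purely singular problem \eqref{psp}, for which the boundary behavior $w\sim\delta^s$ is known.

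\emph{Existence.} Since $(-\De)^s(cw) = c^{1+q}/(cw)^q$ for any $c>0$, I would take $\underline z := c_0 w$ with $c_0>0$ small enough that $c_0^{1+q} \leq \la f(0)$ (so $\underline z$ is a subsolution uniformly in $u$), and $\overline z := Mw$ with $M>0$ so large that $M^{q+1} \geq \la f(0) + \|\tilde f(u)\|_{L^\infty(\Om)} \|w\|_{L^\infty(\Om)}^q$, which turns $\overline z$ into a supersolution. Both $\underline z,\overline z$ belong to $\tilde H^s(\Om)\cap C_{\phi_{1,s}}^+(\Om)$ and $\underline z \leq \overline z$ on $\Om$. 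A monotone iteration starting from $\overline z$, solving at each step the linear fractional problem $(-\De)^s z_{k+1} = \la f(0)/z_k^q + \tilde f(u)$ in $\Om$ with $z_{k+1}=0$ in $\R^n\setminus\Om$ (whose right-hand side is in $L^\infty(\Om)$ since $z_k \geq c_0 w$), produces a decreasing sequence pinched between $\underline z$ and $\overline z$. The uniform $\tilde H^s$ bound for the iterates comes from Hardy's inequality as in the remark just preceding the proposition, using $z_k \geq c_0 w \geq k_1\delta^s$; the pointwise limit $z$ is then a weak solution of $(S_\la)$ in $\tilde H^s(\Om)$.

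\emph{Uniqueness.} The right-hand side of $(S_\la)$ is strictly decreasing in $z$ for fixed $u$. Testing the difference of two solutions $z_1,z_2$ against $(z_1-z_2)^+ \in \tilde H^s(\Om)$ (again legitimate by the lower bound $z_i \geq k_1 \delta^s$ combined with Hardy's inequality) gives $\|(z_1-z_2)^+\|^2 \leq \la f(0)\int_\Om (z_2^{-q} - z_1^{-q})(z_1-z_2)^+\,\mathrm{d}x \leq 0$, so $z_1 \leq z_2$; swapping the roles yields $z_1=z_2$.

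\emph{Boundary regularity and conclusion.} From $c_0 w \leq z \leq Mw$ and $w \in C_{\phi_{1,s}}^+(\Om)$ one obtains $k_1 \delta^s \leq z \leq k_2 \delta^s$ in $\Om$, so the forcing $\la f(0)/z^q + \tilde f(u) \leq C\delta^{-qs}$ with $qs<1$, and this datum is locally H\"older in $\Om$. Combining interior H\"older regularity with the boundary regularity results of \cite{RoSe} yields $z\in C^s(\mb R^n)$; repeating the argument used in the proof of Theorem $1.2$ of \cite{AJS} then upgrades this to $z \in C_{\phi_{1,s}}^+(\Om)$, as required. The main obstacle is to keep the boundary estimate $z \sim \delta^s$ under control throughout the argument: it is precisely this bound that simultaneously justifies the use of $\tilde H^s(\Om)$ test functions via Hardy's inequality, tames the singular term $1/z^q$ inside integrals, and places $z$ in the open cone $C_{\phi_{1,s}}^+(\Om)$. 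Anchoring the barriers to $w$ (rather than to $\phi_{1,s}$) is what makes these bounds propagate through both the monotone iteration and the comparison used for uniqueness.
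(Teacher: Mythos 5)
Your overall architecture — barriers of the form $c_0w\le z\le Mw$ built from the solution $w$ of \eqref{psp}, Hardy's inequality to legitimise $\tilde H^s(\Om)$ test functions once $z\ge k_1\delta^s$, comparison with $(z_1-z_2)^+$ for uniqueness, and the final bootstrap to $C_{\phi_{1,s}}^+(\Om)$ — coincides with the paper's (which uses $m\phi_{1,s}$ as the lower barrier and $Mw$ as the upper one), and those parts are sound up to a harmless slip in the supersolution condition (the correct requirement is $M^{1+q}\ge \la f(0)+\|\tilde f(u)\|_{L^\infty(\Om)}M^{q}\|w\|^q_{L^\infty(\Om)}$, still satisfiable for $M$ large). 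The genuine gap is the existence step. The map $z\mapsto \la f(0)z^{-q}$ is \emph{decreasing}, so the iteration $(-\De)^s z_{k+1}=\la f(0)z_k^{-q}+\tilde f(u)$ is order-reversing rather than order-preserving: one gets $z_1\le z_0$, then $z_2\ge z_1$, and in general only the two alternating chains $z_1\le z_3\le\cdots$ and $\cdots\le z_4\le z_2\le z_0$, whose limits need not coincide without an additional argument; the sequence is certainly not decreasing as you claim. The standard repair — rewrite the scheme as $(-\De)^sz_{k+1}+Kz_{k+1}=\la f(0)z_k^{-q}+Kz_k+\tilde f(u)$ with $K$ a Lipschitz constant of the nonlinearity on $[\underline z,\overline z]$ — is unavailable here because the derivative of $z^{-q}$ on that interval is of order $\delta^{-s(q+1)}$ and blows up at $\partial\Om$; this is precisely the ``primary hindrance in making the operator monotone'' highlighted in the introduction.

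The paper circumvents this by regularising the singularity to $(z+\e)^{-q}$, minimising the resulting strictly convex, coercive, weakly lower semicontinuous functional $E_\e$ on the positive cone of $\tilde H^s(\Om)$ (uniqueness of the minimiser $z_\e$ is then automatic), proving $z_\e\ge m\phi_{1,s}$ through a one-sided derivative argument at the minimiser, obtaining $z_\e\le Mw$ and monotonicity in $\e$ by comparison, and finally passing to the limit $\e\downarrow 0$ using the Hardy-inequality domination. To close your argument you would need either to adopt such a regularisation, or to exploit the convexity of the energy of $(S_\la)$ directly (which raises differentiability issues at the constraint $z\ge 0$), or to complete the alternating-iteration scheme by showing the two subsequential limits agree; none of these is supplied in your write-up.
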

\begin{proof}
Let $u \in C_0(\overline \Om)$ and define $v= \tilde f \circ u$ then $v \in C_0(\overline \Om)$ and $v \geq 0$ in $\Om$. To show that $T$ is a well defined map we need to show that $(S_\la)$ has a unique solution corresponding to the above $u$. We introduce the following approximated problem for $\e>0$
\begin{equation*}
(S^\e_\la)\left\{
\begin{split}
(-\De)^sz-\la \frac{f(0)}{(z+\e)^q} = \tilde f(u),\; z>0\; \text{in}\;\Om,\;\; z = 0 \; \text{in}\; \mb R^n \setminus \Om.
\end{split}
\right.
\end{equation*}
Then $(S_\la^\e)$ has a unique solution in $\tilde H^s(\Om)$. Indeed, let $\tilde H^s(\Om)^+$ denote the positive cone of $\tilde H^s(\Om)$ and define the energy functional $E_\e: \tilde H^s(\Om)^+ \rightarrow  \mb R$ as
\[ E_\e(z) = \frac{\|z\|^2}{2} - \la \frac{f(0)}{1-q}\int_{\Om}(z+\e)^{1-q}~\mathrm{d}x- \int_{\Om}\tilde f(u)z~\mathrm{d}x  \]
where $z \in \tilde H^s(\Om)^+$. Then $E_\e$ is weakly lower semi-continuous, strictly convex and coercive in $\tilde H^s(\Om)^+$.  Therefore, $E_\e$ admits a unique minimizer, say $z_\e \not\equiv 0$ in $\tilde H^s(\Om)^+$. Since for small $t>0$ the term $\displaystyle t^{1-q}\int_{\Om}(z+\e)^{1-q}~\mathrm{d}x $  dominates so $E_\e(tz)$ can be made small enough and we get $\inf\limits_{\tilde H^s(\Om)^+} E_\e<0$. We choose $m>0$ (independent of $\e$) sufficiently small such that
\[m\la_{1,s}\phi_{1,s} \leq v+ \la \frac{f(0)}{(m\phi_{1,s}+1)^q}\; \text{in}\; \Om.\]
Then we get that
\begin{equation}\label{wd1}
(-\De)^s(m\phi_{1,s}) = m\la_{1,s}\phi_{1,s} \leq v+ \la \frac{f(0)}{(m\phi_{1,s}+1)^q}\leq v+ \la \frac{f(0)}{(m\phi_{1,s}+\e)^q}\; \text{in}\; \Om.
\end{equation}
\textbf{Claim (1):} $m\phi_{1,s}\leq z_\e$, for each $\e>0$.\\
We define $\tilde z_\e := (m\phi_{1,s}-z_\e)^+$ and assume that meas(Supp$\tilde z_\e$) is non zero. Then $\eta : [0,1]\to \mb R$ defined by $\eta(t) = E_\e(z_\e+t\tilde z_\e)$ is a convex function since $E_\e|_{\tilde H^s(\Om)^+}$ is convex. Also
 \begin{align*}
\eta^\prime(1) =   C_s^n\int_Q\frac{( (z_\e +\tilde z_{\e})(x)-(z_\e + \tilde z_{\e} )(y))(\tilde z_{\e}(x)-\tilde z_{\e}(y))}{|x-y|^{n+2s}}~dxdy -\la\int_{\Om}\frac{f(0)\tilde z_{\e}}{( z_\e +\tilde z_{\e} +\e)^q}-\int_{\Om}\tilde f(u)\tilde  z_{\e}
\end{align*}
in $(0,1]$. The fact that $z_\e$ is a minimizer of $E_\e$ gives that $\lim\limits_{t\to 0^+}\eta^\prime(t)\geq 0$ and $0 \leq \eta^\prime (0) \leq \eta^\prime(1)$.  Let us recall the following inequality for any $\psi$ being a convex Lipschitz function:
 \[(-\De)^s\psi(u) \leq \psi^\prime (u)(-\De)^su.\]
 Therefore using this with $\psi(x)=\max \left\{x,0\right\}$ and \eqref{wd1}, we get
$\eta^\prime(1)\leq \langle E_\e^\prime(m\phi_{1,s}), \tilde z_\e \rangle <0$ which is a contradiction. Hence $\text{supp}(\tilde z_\e)$ must have measure zero which establishes the claim (1). Thus $E_\e$ is G\^{a}teaux differentiable at $z_\e$ and $z_\e$ satisfies $(S^\e_\la)$ weakly. Since
\[\tilde f(u)+ \la \frac{f(0)}{(z_\e+\e)^q}\in L^\infty(\Om),\; \text{for each}\; \e>0,\]
from Proposition $1.1$ and Theorem $1.2$ of \cite{RoSe} and claim (1) we get that $z_\e \in C^s(\mb R^n) \cap C_{\phi_{1,s}}^+(\Om)$ for each $\e>0$. Thus following the arguments in proof of Theorem $1.1$(p.7) of \cite{AJS} we can show that $\{z_\e\}_{\e>0}$ is a monotone increasing sequence as $\e\downarrow 0^+$ that is for $0<\e<\e^\prime$, it must be $z_{\e^\prime}< z_{\e}$ in $\Om$. Thus we infer that $z= \lim\limits_{\e \downarrow 0^+}z_\e \geq m\phi_{1,s}$. From $z_\e$ satisfying $(S^\e_\la)$ we obtain
\begin{equation}\label{wd2}
\|z_\e\|^2 = \la \int_{\Om}\frac{f(0)z_\e}{(z_\e+\e)^q}~\mathrm{d}x+ \int_{\Om}\tilde f(u)z_\e ~\mathrm{d}x.
\end{equation}
We recall the function $w\in \tilde H^s(\Om)\cap C_{\phi_{1,s}}^+(\Om)$ satisfying \eqref{psp}. Let $\overline z= Mw$ for $M\gg 1$ (independent of $\e$) sufficiently large so that
\[M \left( \frac{1}{w^q}- \frac{\la f(0)}{(Mw)^q}\right) > \tilde f(u)\; \text{in}\; \Om.\]
Then $\overline z$ satisfies
\[(-\De)^s\overline z -\la \frac{f(0)}{(\overline z+\e)^q} =  \frac{M}{w^q}- \frac{\la f(0)}{(M w+\e)^q} > M \left( \frac{1}{w^q}- \frac{\la f(0)}{(Mw)^q}\right) > \tilde f(u) \; \text{in}\; \Om.\]
Now we prove that $z_\e\leq \overline z$ by using a comparison argument, which we will refer as comparison principle in future. We know that $h= (z_\e - \overline z) \in \tilde H^s(\Om)$ satisfies the equation
\begin{equation}\label{wd3}
(\De)^s(z_\e-\overline z) \leq \la f(0)\left( \frac{1}{(z_\e+\e)^q}-\frac{1}{(\overline z+\e)^q}  \right)\; \text{in}\; \Om.
\end{equation}
If we denote $h^+=\max\{h,0\}$ and $h^-=-\min\{h,0\}$ then $h=h^+-h^-$. Let $\Om_h^+= \{x\in \Om:\; z_e>\overline z\}$ and $\Om^-_h= \Om \setminus \Om_h^+$ then testing \eqref{wd3} with $h^+$ gives
\begin{equation}\label{wd4}
C^n_s\int_Q \frac{(h(x)-h(y))(h^+(x)-h^+(y))}{|x-y|^{n+2s}}~\mathrm{d}x\mathrm{d}y \leq \la f(0) \int_{\Om_h^+}\left( \frac{1}{(z_\e+\e)^q}-\frac{1}{(\overline z+\e)^q}  \right)h^+~ \mathrm{d}x.
\end{equation}
It is easy to see that $(h(x)-h(y))(h^+(x)-h^+(y)) = h(x)h^+(x)\geq 0$ on $\Om \times \mc C\Om$ and $(h(x)-h(y))(h^+(x)-h^+(y))\geq 0$ on $\Om_h^+ \times \Om_h^-$. This gives
\[0< \int_{\Om_h^+}\int_{\Om_h^+} \frac{(h(x)-h(y))(h^+(x)-h^+(y))}{|x-y|^{n+2s}}~\mathrm{d}x\mathrm{d}y \leq \int_Q \frac{(h(x)-h(y))(h^+(x)-h^+(y))}{|x-y|^{n+2s}}~\mathrm{d}x\mathrm{d}y.\]
Therefore from \eqref{wd4} we obtain
\[ 0<C^n_s\int_{\Om_h^+}\int_{\Om_h^+} \frac{(h(x)-h(y))(h^+(x)-h^+(y))}{|x-y|^{n+2s}}~\mathrm{d}x\mathrm{d}y \leq \la f(0) \int_{\Om_h^+}\left( \frac{1}{(z_\e+\e)^q}-\frac{1}{(\overline z+\e)^q}  \right)h^+~ \mathrm{d}x \leq 0.\]
Hence it must be that $z_\e \leq \overline z$ in $\Om$ for each $\e>0$. Now we use this in \eqref{wd2} and H\"{o}lder inequality to get
\[\|z_\e\|^2 \leq \la f(0)\int_{\Om}\overline z^{1-q}~\mathrm{d}x + \|\tilde f(u)\|_{L^2(\Om)}\|\overline z\|_{L^2(\Om)} := m_0 <+\infty\]
which implies $\limsup\limits_{\e>0}\|z_\e\| < +\infty$. Thus $\{z_\e\}_{\e>0}$ is a bounded sequence in $\tilde H^s(\Om)$ and so there must exist a $z \in \tilde H^s(\Om)$ such that, up to a subsequence, $z_\e \rightharpoonup z$ weakly in $\tilde H^s(\Om)$ as $\e\to 0$. We already know that $z_\e \to z$ pointwise a.e. in $\Om$. Moreover by Hardy's inequality, for any $\varphi \in \tilde H^s(\Om)$ we get
\[0 < \left| \frac{\varphi}{(z_\e+\e)^q} \right| \leq \left| \frac{\varphi}{(m\phi_{1,s})^q} \right| \in L^1(\Om). \]
Therefore we can use Lebesgue Dominated convergence theorem to pass through the limit as $\e \to 0^+$ in $(S_\la^\e)$ to obtain
 \begin{equation*}\label{weaksol2}
 C^n_s\int_Q\frac{(z(x)-z(y))(\varphi(x)-\varphi(y))}{|x-y|^{n+2s}}\mathrm{d}x\mathrm{d}y-\la f(0)\int_{\Om}\frac{\varphi}{z^q}\mathrm{d}x = \int_{\Om} \tilde f(u)\varphi\mathrm{d}x.
\end{equation*}
that is $z$ is a weak solution of $(S_\la)$. Finally it remains to show that $z \in C_{\phi_{1,s}}^+(\Om)$. But it easily following from $\bar{z}\geq z\geq z_\epsilon \geq m\phi_{1,s}$ in $\Om$. Thus, $T$ is well defined and this completes the proof. \hfill{\QED}
 \end{proof}

Before proving our next result, we recall Theorem $1.2$ from \cite{del-pezzo} as follows.
\begin{Theorem}\label{smp}
Let $c \in L^1_{loc}(\Om)$ be a non positive function and $ u \in H^s(\Om)$  be a weak supersolution of
\[(-\De)^su = c(x)u \; \text{in}\; \Om\]
then
\begin{enumerate}
\item If $\Om$ is bounded and $u\geq 0$ a.e. in $\mc C \Om$ then either $u>0$ a.e. in $\Om$ or $u =0$ a.e. in $\mb R^n$.
\item If $u \geq 0$ a.e. in $\mb R^n$ then either $ u>0$ a.e. in $\Om$ or $u=0$ a.e. in $\mb R^n$.
\end{enumerate}
\end{Theorem}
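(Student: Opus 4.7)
My plan is to handle both parts together: first derive a weak maximum principle (nonnegativity of $u$ on all of $\mb R^n$), then upgrade to the dichotomy by exploiting the strictly nonlocal nature of $(-\De)^s$. Once $u\geq 0$ everywhere is established, parts (1) and (2) run in parallel from the same argument.

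For part (1), I would test the supersolution inequality against $\varphi = u^-$, which lies in $\tilde H^s(\Om)$ because $u^- \equiv 0$ a.e.\ in $\mc C\Om$ by hypothesis. The pointwise algebraic identity
\[
(u(x)-u(y))(u^-(x)-u^-(y)) = -(u^-(x)-u^-(y))^2 - u^+(x)u^-(y) - u^+(y)u^-(x) \leq -(u^-(x)-u^-(y))^2,
\]
together with $\int_\Om c\, u\, u^-\,dx = -\int_\Om c\,(u^-)^2\,dx \geq 0$ (since $c\leq 0$), forces the weak supersolution inequality to collapse to $\|u^-\|^2 \leq 0$. Hence $u^-\equiv 0$ and $u\geq 0$ a.e.\ in $\mb R^n$. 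Part (2) starts from this nonnegativity as an assumption, so both cases are now reduced to the strong part.

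For the strong statement, assume $u\not\equiv 0$ on $\mb R^n$ and suppose for contradiction that $A:=\{x\in \Om : u(x)=0\}$ has positive Lebesgue measure. Pick a density point $x_0\in A$ and a ball $B_r(x_0)\subset \Om$ with $|A\cap B_r(x_0)|/|B_r(x_0)|$ arbitrarily close to $1$. For a nonnegative test function $\varphi\in C_c^\infty(B_r(x_0))$ essentially supported in $A$, the right-hand side $\int_\Om c\, u\, \varphi\,dx$ vanishes because $u=0$ on $\text{supp}\,\varphi$, while the bilinear form
\[
C^n_s\int_{\mb R^n}\int_{\mb R^n}\frac{(u(x)-u(y))(\varphi(x)-\varphi(y))}{|x-y|^{n+2s}}\,dx\,dy
\]
reduces (using $u\equiv 0$ on $\text{supp}\,\varphi$ and symmetrization) to $-2C^n_s\int \varphi(x)\int \frac{u(y)}{|x-y|^{n+2s}}\,dy\,dx$, which is strictly negative as soon as $u>0$ on a set of positive measure somewhere in $\mb R^n$. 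This contradicts the supersolution inequality.

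The main obstacle is executing this last step rigorously: genuine admissible test functions essentially supported in $A$ must be constructed (the indicator of $A$ is not in $\tilde H^s(\Om)$), and the $L^1_{loc}$ regularity of $c$ makes the duality $\int c u \varphi$ delicate whenever $\varphi$ is not bounded. The standard resolutions are either a logarithmic test $\varphi = \eta^2/(u+\e)$ with a smooth cutoff $\eta$ and $\e\downarrow 0^+$, which yields a $BMO$-type estimate forcing $\log u$ to be locally integrable and hence $u>0$ a.e., or an appeal to a nonlocal Harnack inequality (Di Castro--Kuusi--Palatucci, Kassmann) adapted to potentials in $L^1_{loc}$. Either route ultimately exploits the fact that $(-\De)^s$ sees the whole space, so vanishing on a set of positive measure inside $\Om$ instantly conflicts with positivity anywhere else.
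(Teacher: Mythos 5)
A point of comparison first: the paper does not prove Theorem \ref{smp} at all — it is recalled verbatim from Del Pezzo--Quaas \cite{del-pezzo} (``we recall Theorem 1.2 from \cite{del-pezzo}''), so your proposal has to be measured against the proof in that reference rather than against anything internal to this paper.

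Your reduction to nonnegativity is correct and complete. Testing the supersolution inequality with $\varphi=u^-\ge 0$ (admissible in case (1) because $u\ge 0$ a.e.\ in $\mc C\Om$ forces $u^-$ to vanish outside $\Om$), the pointwise identity you quote together with $c\le 0$ gives $-C^n_s\|u^-\|^2\ge \int_\Om c\,u\,u^-=-\int_\Om c\,(u^-)^2\ge 0$, hence $u^-\equiv 0$. The only caveat is that the finiteness of the pairing $\int_\Om c\,u\,\varphi$ for such $\varphi$ must be built into the definition of weak supersolution when $c$ is merely $L^1_{loc}$; this is how \cite{del-pezzo} sets things up, so it is a definitional matter rather than a gap.

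The strong part is where the genuine gap sits. The density-point argument cannot be completed as stated: the set $A=\{x\in\Om: u(x)=0\}$ is only measurable and may have empty interior (a fat Cantor set, say), so there need not exist any nonzero admissible $\varphi\ge 0$ with $\mathrm{supp}\,\varphi\subset A$; and once $\varphi$ is only ``essentially'' supported in $A$, both cancellations you rely on fail --- $\int_\Om c\,u\,\varphi$ no longer vanishes, and the block of the bilinear form over $\mathrm{supp}\,\varphi\times\mathrm{supp}\,\varphi$ no longer vanishes, so no sign contradiction results. You correctly identify the repair --- the logarithmic test function $\eta^2/(u+\e)$ yielding a logarithmic Caccioppoli estimate, equivalently the nonlocal weak Harnack inequality of Di Castro--Kuusi--Palatucci --- and this is precisely the route taken in \cite{del-pezzo} (their logarithmic lemma plus a covering argument). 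But you only name that step; you do not carry it out, and it is the entire content of the strong minimum principle. As a self-contained proof the proposal is therefore incomplete at exactly the point where the real work lies; as a correct reduction of the theorem to the known logarithmic lemma, it is sound and matches the strategy of the cited source.
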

\begin{Lemma}\label{mi}
The map $T$ is strictly monotone increasing from $C_0(\overline \Om)$ to $C_{\phi_{1,s}}^+(\Om)$.
\end{Lemma}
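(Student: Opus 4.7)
The plan is, given $u_1, u_2 \in C_0(\overline{\Om})$ with $u_1 \leq u_2$ and $u_1 \not\equiv u_2$, to set $z_i := T(u_i) \in \tilde H^s(\Om) \cap C_{\phi_{1,s}}^+(\Om)$ and to prove first that $z_1 \leq z_2$ in $\Om$ and then that $z_2 - z_1$ lies in the open positive cone of $C_{\phi_{1,s}}^+(\Om)$. Order preservation I would obtain by the same comparison argument used in the second half of the proof of Proposition \ref{well-defined}: subtracting the weak forms of $(S_\la)$ written for $u_1$ and $u_2$ gives
\begin{equation*}
(-\De)^s(z_1 - z_2) = \la f(0)\left(\frac{1}{z_1^q} - \frac{1}{z_2^q}\right) + \tilde f(u_1) - \tilde f(u_2) \quad \text{in } \Om,
\end{equation*}
and, since both $z_i \geq m \phi_{1,s}$, Hardy's inequality makes $h^+ := (z_1 - z_2)^+$ admissible as a test function in $\tilde H^s(\Om)$. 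On $\{h > 0\}$ the first term on the right is negative (because $s \mapsto s^{-q}$ is decreasing), and the second is non-positive since $\tilde f$ is non-decreasing and $u_1 \leq u_2$, whereas the fractional Dirichlet form on the left is non-negative. These three facts force $\|h^+\|^2 \leq 0$, hence $z_1 \leq z_2$ a.e.\ in $\Om$.

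To upgrade this to strong positivity of $w := z_2 - z_1$, I apply the mean value theorem to $s \mapsto s^{-q}$ and obtain a measurable $\xi$ with $z_1 \leq \xi \leq z_2$ satisfying $\tfrac{1}{z_1^q} - \tfrac{1}{z_2^q} = q\, \xi^{-q-1}\, w$, so that $w$ weakly solves the linear equation
\begin{equation*}
(-\De)^s w + c(x)\, w = g(x), \quad c(x) := \la q f(0)\, \xi^{-q-1}(x) \geq 0, \ \ g(x) := \tilde f(u_2) - \tilde f(u_1) \geq 0.
\end{equation*}
Since $z_i \geq m \phi_{1,s}$, the coefficient $c$ is bounded on every compact subset of $\Om$ and therefore belongs to $L^1_{\mathrm{loc}}(\Om)$, while the strict monotonicity of $\tilde f$ on $\R^+$ (a direct consequence of (h1), possibly after the cosmetic replacement $\tilde f \mapsto \tilde f + \tilde k\, \mathrm{Id}$) combined with $u_1 \not\equiv u_2$ forces $g \not\equiv 0$. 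Rewriting the equation as $(-\De)^s w \geq -c(x) w$ with non-positive potential $-c$, I invoke Theorem \ref{smp} to conclude the dichotomy: either $w \equiv 0$ in $\mb R^n$ or $w > 0$ a.e.\ in $\Om$. The first alternative is discarded using $g \not\equiv 0$.

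The final step is to promote $w > 0$ into the strong ordering $w \geq c' \phi_{1,s}$ in $\Om$ for some $c' > 0$, which is what "strictly monotone increasing into $C_{\phi_{1,s}}^+(\Om)$" demands. I would handle this by a fractional Hopf-type argument: on a sub-domain $\Om_\eta \subset\subset \Om$ where $c$ is bounded, I compare $w$ from below against a small multiple of $\phi_{1,s}$ via the equation $(-\De)^s w \geq g - c w$, then patch with the boundary behaviour of $z_1, z_2$ inherited from $z_i \in C_{\phi_{1,s}}^+(\Om)$ and the $C^s$-regularity estimates of \cite{RoSe}. This is the chief obstacle: the coefficient $c$ is controlled only by $c(x) \lesssim \delta^{-s(q+1)}(x)$ near $\pa \Om$, so the classical fractional Hopf lemma does not apply directly, and one needs a careful barrier construction exploiting $g \in L^\infty(\Om)$ with $g \not\equiv 0$ to quantify the boundary decay rate of $w$ from below.
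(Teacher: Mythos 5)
Your first two steps reproduce the paper's proof of this lemma essentially verbatim: the comparison argument with the test function $(z_1-z_2)^+$ to get $z_1\leq z_2$, then the mean value theorem applied to $t\mapsto t^{-q}$ to write the difference as a supersolution of $(-\De)^s w = -c(x)w$ with $c=\la q f(0)\xi^{-q-1}\in L^1_{loc}(\Om)$ (thanks to $z_i\geq m\phi_{1,s}$), followed by Theorem \ref{smp} to conclude $w>0$ in $\Om$. Your explicit remark that one needs $\tilde f$ strictly increasing (granted by the normalization after (h1)) to exclude the alternative $w\equiv 0$ is a point the paper leaves implicit, and it is handled correctly.

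Your third paragraph, however, addresses a property that is not part of this lemma. In the paper ``strictly monotone increasing'' means only the pointwise strict ordering $T(u_1)<T(u_2)$ in $\Om$; the phrase ``to $C_{\phi_{1,s}}^+(\Om)$'' describes the codomain (already established in Proposition \ref{well-defined}), not the cone in which the difference lives. The statement that $T(u_2)-T(u_1)\in C_{\phi_{1,s}}^+(\Om)$ is the \emph{strong} increasingness of $T$, which is Theorem \ref{si}. You correctly diagnose why that step is delicate --- the potential blows up like $\delta^{-s(q+1)}$ at the boundary, so no off-the-shelf Hopf lemma applies --- but your proposed fix (a barrier built from $g\in L^\infty$, $g\not\equiv 0$) is left as a sketch and would not obviously close: $g$ may vanish near $\pa\Om$ and gives no boundary decay rate by itself. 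The paper's actual device is different: it constructs, via the infinite semipositone problem $(I_\theta)$ of Section 6, a function $v\asymp\delta^s$ satisfying $(-\De)^s v + k\,\delta^{-s(q+1)}v\leq 0$ in a boundary strip $\Om_\eta$ (possible because $\gamma\in(q,1)$ makes the term $-\theta v^{-\gamma}$ dominate there), and then compares $m_3 v$ with $w$ on $\Om_\eta$, using $\inf_{\Om\setminus\Om_\eta}w>0$ from the interior. So for the lemma as stated your proof is complete and identical in method to the paper's; for the stronger statement you reach for, the missing ingredient is precisely the barrier supplied by Theorem \ref{isp}.
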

\begin{proof}
First we show that $T$ is monotone increasing. For this, we let $u_1, u_2 \in C_0(\overline \Om)$ be such that $u_1\leq u_2$. Then $\tilde f(u_1)\leq \tilde f(u_2)$, since $\tilde f$ is increasing. Now let $z_i= T(u_i)$ for $i=1,2$, then each $z_i$ satisfies
\begin{equation*}
\begin{split}
(-\De)^sz_i -\la \frac{f(0)}{z_i^q} = \tilde f(u_i),\; z_i>0\; \text{in}\;\Om,\;\; z_i = 0 \; \text{in}\; \mb R^n \setminus \Om
\end{split}
\end{equation*}
and $\hat z:=(z_2-z_1)\in \tilde H^s(\Om)$ satisfies
\begin{equation}\label{mi1}
(-\De)^s (z_2-z_1)- \la f(0)\left( \frac{1}{z_2^q}- \frac{1}{z_1^q}\right)= \tilde f(u_2)-\tilde f(u_1)\geq 0\; \text{in}\; \Om.
\end{equation}
Then testing \eqref{mi1} with $\hat z^-$ gives
\begin{equation}\label{mi2}
\int_Q \frac{(\hat z(x)-\hat z(y))(\hat z^-(x)-\hat z^-(y))}{|x-y|^{n+2s}}~\mathrm{d}x\mathrm{d}y \geq \la f(0)\int_{\{z_2<z_1\}}\left( \frac{1}{z_2^q}- \frac{1}{z_1^q}\right)\hat z^-~\mathrm{d}x.
\end{equation}
It is easy to see that the right hand side of \eqref{mi2} is non positive and the left hand side can be estimated as
\[\int_Q \frac{(\hat z(x)-\hat z(y))(\hat z^-(x)-\hat z^-(y))}{|x-y|^{n+2s}}~\mathrm{d}x\mathrm{d}y \leq -\int_{\{z_2<z_1\}}\int_{\{z_2<z_1\}}\frac{|\hat z^-(x)-\hat z^-(y)|^2}{|x-y|^{n+2s}}~\mathrm{d}x\mathrm{d}y\leq 0.\]
Therefore it must be that $z^- =0$ in $\Om$ that is $z_2 \geq z_1$ in $\Om$. Now we assume that $ u_2 \geq u_1$ and $u_1 \not\equiv u_2$ then we show that $z_2>z_1$  in $\Om$. We already know that $z_2 \geq z_1$ and by Mean value theorem we get that there exists a $\xi \in (z_1,z_2)$ such that \eqref{mi1} can be written as
\begin{equation}\label{mi3}
(-\De)^s (z_2-z_1)+ \la f(0)\left( \frac{q}{\xi^{q+1}}\right)(z_2-z_1)= \tilde f(u_2)-\tilde f(u_1)\geq 0\; \text{in}\; \Om.
\end{equation}
Let $c(x)= \displaystyle \frac{1}{\xi^{q+1}(x)}$ then since $\xi \in (z_1,z_2)$ and $ z_i \in C_{\phi_{1,s}}^+(\Om)$, for $i=1,2$ we easily get that $c \in L^1_{loc}(\Om)$. Therefore from Theorem \ref{smp}, we obtain that $z_2-z_1>0$ in $\Om$. That is $T$ is a strictly monotone increasing map. \hfill{\QED}
\end{proof}

 The proof of our next result is motivated by the proof of Lemma $4.3$ in \cite{AJS}.

 \begin{Proposition}\label{comp}
 The map $T : C_{\phi_{1,s}}(\Om) \to C_{\phi_{1,s}}(\Om)$ is compact.
 \end{Proposition}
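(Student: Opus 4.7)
The plan is to take a bounded sequence $\{u_n\}\subset C_{\phi_{1,s}}(\Om)$ and show that the images $z_n:=T(u_n)$ admit a subsequence converging in the $\|\cdot/\phi_{1,s}\|_{L^\infty}$ norm. First I would record that boundedness in $C_{\phi_{1,s}}(\Om)$ gives a uniform $L^\infty$ bound on $u_n$ (since $\phi_{1,s}\in L^\infty$), hence $\{\tilde f(u_n)\}$ is uniformly bounded in $L^\infty(\Om)$ by continuity of $\tilde f$ on a bounded set.

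Next I would establish uniform two-sided barriers for $z_n$ that are independent of $n$. For the upper barrier, choose $M\gg 1$ (independent of $n$) such that
\[
M\Big(\tfrac{1}{w^q}-\tfrac{\la f(0)}{(Mw)^q}\Big)\;\geq\; \sup_n\|\tilde f(u_n)\|_{L^\infty(\Om)},
\]
so that $\bar z=Mw$ is a supersolution of $(S_\la)$ with right-hand side $\tilde f(u_n)$ for every $n$; the comparison argument used in the proof of Proposition~\ref{well-defined} then yields $z_n\le Mw$ on $\Om$. For the lower barrier, pick $m>0$ (independent of $n$) so small that $m\la_{1,s}\phi_{1,s}\le \la f(0)/(m\phi_{1,s})^q$ in $\Om$; the same comparison argument, applied to $(S_\la^\e)$ and passed to the limit $\e\downarrow 0$ as in Proposition~\ref{well-defined}, produces $z_n\ge m\phi_{1,s}$ for all $n$. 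In particular $z_n\ge c\,\delta^s$ uniformly, so the singular term $f(0)/z_n^q\le C\,\delta^{-sq}$ is uniformly bounded in $L^p(\Om)$ for every $p<1/(sq)$, and the forcing $g_n:=\la f(0)/z_n^q+\tilde f(u_n)$ is uniformly bounded in a suitable $L^p(\Om)$ with $p>n/(2s)$ (using $q<1$ so that $sq<s<1$).

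The heart of the proof is then to promote this $L^p$ bound on $g_n$ to a uniform Hölder bound for $z_n/\delta^s$. Because $(-\De)^s z_n = g_n$ with $z_n=0$ in $\mathbb R^n\setminus\Om$, the fine boundary regularity of Ros-Oton--Serra (Proposition~1.1 and Theorem~1.2 of \cite{RoSe}, invoked already in the excerpt) gives $z_n\in C^s(\mathbb R^n)$ and $z_n/\delta^s\in C^\alpha(\ov\Om)$ for some $\alpha\in(0,s)$, together with an estimate
\[
\Big\|\tfrac{z_n}{\delta^s}\Big\|_{C^\alpha(\ov\Om)}\;\le\;C\bigl(\|g_n\|_{L^\infty(\Om)}+\|g_n\|_{L^p(\Om)}\bigr),
\]
where the right-hand side is uniformly bounded in $n$ by the previous step. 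Since $\phi_{1,s}\in C_{\delta^s}^+(\Om)$, $z_n/\phi_{1,s}$ is uniformly bounded in $C^\alpha(\ov\Om)$ as well.

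Finally I would apply the Arzel\`a--Ascoli theorem on the compact set $\ov\Om$: $\{z_n/\phi_{1,s}\}$ is uniformly bounded and equicontinuous, so a subsequence converges uniformly to some $\varphi\in C(\ov\Om)$, which is precisely convergence of $z_n$ in $C_{\phi_{1,s}}(\Om)$. This proves that $T$ maps bounded sets of $C_{\phi_{1,s}}(\Om)$ into relatively compact subsets of $C_{\phi_{1,s}}(\Om)$, i.e.\ $T$ is compact. The main obstacle is the third step, namely handling the two-sided singularity: the a priori lower bound $z_n\ge m\phi_{1,s}$ must be uniform in $n$ in order to control $f(0)/z_n^q$ and in order to make the Ros-Oton--Serra boundary regularity applicable with $n$-independent constants.
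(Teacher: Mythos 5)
Your setup (uniform $L^\infty$ bound on $\tilde f(u_n)$, the $n$-independent barriers $m\phi_{1,s}\le z_n\le Mw$, and the final Arzel\`a--Ascoli step) matches the paper's proof. But the step you yourself call the heart of the proof contains a genuine gap. The estimate you invoke,
$\|z_n/\delta^s\|_{C^\alpha(\ov\Om)}\le C(\|g_n\|_{L^\infty(\Om)}+\|g_n\|_{L^p(\Om)})$,
is unusable here: the forcing $g_n=\la f(0)/z_n^q+\tilde f(u_n)$ behaves like $\delta^{-sq}$ near $\partial\Om$, so $\|g_n\|_{L^\infty(\Om)}=+\infty$, and Theorem 1.2 of \cite{RoSe} gives the $C^\alpha(\ov\Om)$ bound on $u/\delta^s$ only for $L^\infty$ right-hand sides; no version with merely $L^p$ data is cited or available in that reference. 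Moreover, your fallback to $L^p$ with $p>n/(2s)$ is not compatible with the integrability you actually have: $\delta^{-sq}\in L^p(\Om)$ forces $p<1/(sq)$, and $n/(2s)<1/(sq)$ requires $q<2/n$, which fails for general $q\in(0,1)$ once $n\ge 2$. So the uniform H\"older bound on $z_n/\delta^s$ does not follow from the regularity theory you appeal to; this is exactly the obstruction the paper flags (the singular term prevents a direct application of \cite{RoSe}).

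The paper resolves it differently: it splits $z_k=z_k^{1,\e}+z_k^{2,\e}+z_k^{3,\e}$, where the singular forcing is cut off on $D_\e=\{\delta\ge\e\}$. On $D_\e$ the term $f(0)\chi_{D_\e}/z_k^q$ is bounded (by $C\e^{-sq}$), so \cite{RoSe} applies to $z_k^{1,\e}$ with $\e$-dependent but $k$-independent constants; the contribution of the boundary layer evaluated on $D_{3\e}$ is handled through the H\"older continuity of $x\mapsto G_s(x,y)/\delta^s(x)$ (Lemma 4.3 of \cite{AJS}); and the remaining near-boundary piece is shown via the Green function estimate \eqref{esti-on-G} to satisfy $\|z_k^{3,\e}/\delta^s\|_{L^\infty}=O(\e^{\beta-sq})$ uniformly in $k$, for $\beta\in(sq,s)$. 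A Cauchy/diagonal argument in $\e$ then yields relative compactness. If you want to salvage your outline, you need to replace your third step by such a decomposition (or by a genuinely proved weighted regularity estimate for forcings of the form $\delta^{-sq}$); as written, the proof does not go through.
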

\begin{proof}
Let $ u \in C_{\phi_{1,s}}(\Om)$ and $T(u)=z \in C_{\phi_{1,s}}(\Om)$ then $z$ solves $(S_\la)$. We can write $z$ as
\[z = (\De)^{-s}\left( \la\frac{f(0)}{z^q}\right)+ (-\De)^{-s}(\tilde f(u))\; \text{in}\; \Om.\]
Let $\{u_k\}_{k \in \mb N} \subset C_{\phi_{1,s}}(\Om)$ be a bounded sequence that is $\sup\limits_{k \in \mb N}\left\|\displaystyle\frac{u_k}{\delta^s} \right\|_{L^\infty(\Om)}< +\infty$ and $T(u_k)= z_k \in C_{\phi_{1,s}}(\Om)$ for each $k$. Then we have
\[z_k = (\De)^{-s}\left( \la\frac{f(0)}{z_k^q}\right)+ (-\De)^{-s}(\tilde f(u_k))\; \text{in}\; \Om.\]
From the proof of Proposition \ref{well-defined} we infer that $m\phi_{1,s}$ and $Mw$ forms sub and supersolution of $(S_\la)$ respectively for appropriate choice of positive constants $m$ and $M$ (independent of $k$). Then by weak comparison principle we get that
\begin{equation}\label{comp1}
 m\phi_{1,s}\leq z_k \leq Mw \; \text{that is}\; k_1 \delta^s(x) \leq z_k(x) \leq k_2 \delta^s(x) \; \text{in}\; \Om
 \end{equation}
for some constants $k_1,k_2>0$.
In order to prove compactness of the map $T$, we need to show that the sequence $\{z_k\}$ is relatively compact in $C_{\phi_{1,s}}^+(\Om)$. Since $\{u_k\}$ is bounded in $C_{\phi_{1,s}}(\Om)$ we get $\tilde f(u_k) \in L^\infty(\Om)$ and $\sup\limits_{k \in \mb N}\|\tilde f(u_k)\|_{L^\infty(\Om)} \leq C_1$ for some constant $C_1>0$. Therefore from Theorem $1.2$ of \cite{RoSe} we obtain
\begin{equation}\label{comp2}
\left\| \frac{(-\De)^{-s} \tilde f(u_k)}{\delta^s}\right\|_{C^{0,\alpha}(\Om)} \leq C \|\tilde f(u_k)\|_{L^\infty(\Om)} \leq C_2
\end{equation}
for some constant $C, C_2>0$ (independent of $k$) and $0<\alpha< \min\{s,1-s\} $. Now for fix $\e>0$ we define the set
\[D_\e := \{x \in \Om: \; \delta(x)\geq \e\}\]
and let $\chi_{D_\e}$ denote the corresponding characteristic function on $D_\e$. We also define the following functions
\begin{equation*}
\begin{split}
z_k^{1,\e} &:= (\De)^{-s}\left( \la\frac{f(0)\chi_{D_\e}}{z_k^q}\right)+ (-\De)^{-s}(\tilde f(u_k))\\
z_k^{2,\e} &:= \left((\De)^{-s}\left( \la\frac{f(0)(1-\chi_{D_\e})}{z_k^q}\right) \right)\chi_{D_{3\e}}\\
z_k^{3,\e} &:= \left((\De)^{-s}\left( \la\frac{f(0)(1-\chi_{D_\e})}{z_k^q}\right)\right)(1-\chi_{D_{3\e}})
\end{split}
\end{equation*}
then clearly, $z_k = z_k^{1,\e}+ z_k^{2,\e}+ z_k^{3,\e}$. Therefore it is enough to prove that each $\{z_k^{i,\e}\}$ for $i=1,2,3$ is relatively compact in $C_{\phi_{1,s}}(\Om)$. Because of \eqref{comp1} we have
\[\la\frac{f(0)\chi_{D_\e}}{z_k^q} \leq \la\frac{f(0)\chi_{D_\e}}{k_1 \delta^{sq}(x)} \leq \la\frac{f(0)}{k_1 \e^{sq}} \]
which implies
\[\sup_{k \in \mb N}\left\| \la\frac{f(0)\chi_{D_\e}}{z_k^q} \right\|_{L^\infty(\Om)} \leq C_3= C_3(\e)\]
for some constant $C_3>0$. So from \eqref{comp2} and Theorem $1.2$ of \cite{RoSe} we infer that
\begin{equation}\label{comp3}
\left\|\frac{z_k^{1,\e}}{\delta^s}\right\|_{C^{0,\alpha}(\Om)} \leq C \left\| \la\frac{f(0)\chi_{D_\e}}{z_k^q} \right\|_{L^\infty(\Om)}+ \left\| \frac{(-\De)^{-s} \tilde f(u_k)}{\delta^s}\right\|_{C^{0,\alpha}(\Om)} \leq C_4 = C_4(\e)
\end{equation}
for some constant $C_4>0$. Thus for each fixed $\e>0$, $\{z_k^{1,\e}\}$ is relatively compact in $C_{\phi_{1,s}}(\Om)$. Considering the sequence $\{z_k^{2,\e}\}$, for any $x, x^\prime \in D_{3\e}$ we get
\begin{equation*}
\begin{split}
\left| \frac{z_k^{2,\e}(x)}{\delta^s(x)}- \frac{z_k^{2,\e}(x^\prime)}{\delta^s(x^\prime)} \right| &= \la \left|\int_\Om \left( \frac{G_s(x,y)}{\delta^s(x)}- \frac{G_s(x^\prime,y)}{\delta^s(x^\prime)} \right)\frac{f(0)(1-\chi_{D_\e})(y)}{z_k^q(y)} ~\mathrm{d}y \right|\\
& \leq C^\prime  \left|\int_\Om \left( \frac{G_s(x,y)}{\delta^s(x)}- \frac{G_s(x^\prime,y)}{\delta^s(x^\prime)} \right)\frac{(1-\chi_{D_\e})(y)}{k_1 \delta^{sq}(y)} ~\mathrm{d}y \right|
\end{split}
\end{equation*}
for some constant $C^\prime >0$. It has been proved in Lemma $4.3$ of \cite{AJS} that the map $x \mapsto \displaystyle \frac{G_s(x,y)}{\delta^s(x)}$ is H\"{o}lder continuous in $D_{3\e}$ uniformly with respect to $y \in \Om \setminus D_\e$ (but still depending on $\e$).  This implies that there exists $C_\e>0$ constant such that $\|G_s(x,y)\|_{C^s(D_{3\e})} \leq C_\e$ uniformly with respect to $y \in \Om\setminus D_\e$. Therefore we finally get that
\begin{align*}
\left| \frac{z_k^{2,\e}(x)}{\delta^s(x)}- \frac{z_k^{2,\e}(x^\prime)}{\delta^s(x^\prime)} \right| \leq \tilde C_\e|x-x^\prime|^s\int_{\Om \setminus D_\e} \frac{1}{\delta^{sq}(y)}~\mathrm{d}y \leq \hat C_\e |x-x^\prime|^s
\end{align*}
for some constant $\tilde C_\e, \; \hat C_\e >0$. This clearly gives that $\{z_k^{2,\e}\}$ is relatively compact in $C_{\phi_{1,s}}(\Om)$. Lastly we consider the sequence $\{z_k^{3,\e}\}$ and fix $\beta \in (sq,s)$. Recalling the estimate \eqref{esti-on-G} for $G_s(x,y)$, for $x \in \Om \setminus D_{3\e}$ we get
\begin{equation}\label{comp5}
\begin{split}
\left| \frac{z_k^{3,\e}(x)}{\delta^s(x)}\right| &= \left| \la \frac{f(0)}{\delta^s(x)}\int_{\mb R^n}\frac{G_s(x,y)(1-\chi_{D_\e})(y)}{\delta^s(x)z_k^q(y)}~\mathrm{d}y \right|\\
& \leq \left| \frac{\la f(0)}{\delta^s(x)} \int_{\mb R^n \setminus D_{\e}} \min\left( \frac{\delta^s(x)\delta^s(y)}{|x-y|^n}, \frac{\delta^s(x)}{|x-y|^{n-s}}\right) \frac{1}{k_1 \delta^{sq}(y)}~\mathrm{d}y \right|\\
& \leq \la f(0) \e^{\beta -sq} \int_{\mb R^n \setminus D_{\e}} \min\left( \frac{\delta^s(y)}{|x-y|^n}, \frac{1}{|x-y|^{n-s}}\right) \frac{1}{k_1 \delta^{\beta}(y)}~\mathrm{d}y \leq O(\e^{\beta-sq}).
\end{split}
\end{equation}
Now we show that $\left\{\displaystyle \frac{z_k}{\delta^s}\right\}$ is relatively compact in $L^\infty(\Om)$. Let $\tau >0$ be small enough. Then because of \eqref{comp5} we can always choose $\e$ small enough such that $\left\|\displaystyle\frac{z_k^{3,\e}}{\delta^s}\right\|_{L^\infty(\Om)} \leq \tau$. For each such $\e>0$ we can get a convergent subsequences $\{z_{k_m}^{1,\e}\}$ and $\{z_{k_m}^{2,\e}\}$ of $\{z_k^{1,\e}\}$ and $\{z_k^{2,\e}\}$ respectively in $L^\infty(\Om)$, since they are relatively compact in $C_{\phi_{1,s}}(\Om)$. Hence we have
\begin{align*}
\left\|\frac{z_{k_m}}{\delta^s} - \frac{z_{k_{m^\prime}}}{\delta^s} \right\|_{L^\infty(\Om)} \leq \left\|\frac{z^{1,\e}_{k_m}}{\delta^s} - \frac{z^{1,\e}_{k_{m^\prime}}}{\delta^s} \right\|_{L^\infty(\Om)}+ \left\|\frac{z^{2,\e}_{k_m}}{\delta^s} - \frac{z^{2,\e}_{k_{m^\prime}}}{\delta^s} \right\|_{L^\infty(\Om)} + 2\tau \leq 4\tau
\end{align*}
when $m,m^\prime \geq K$ for some $ K\in \mb N$. This implies that $\{z_{k_m}\}$ is a Cauchy sequence in $C_{\phi_{1,s}}(\Om)$ and hence convergent too. This proves that the sequence $\{z_k\}$ is relatively compact in $C_{\phi_{1,s}}(\Om)$. \hfill{\QED}
\end{proof}

 We seek help of solution to a nonlocal infinite semipositone problem (discussed in later section) for proving our next result that is the map $T$ is strongly increasing. By strongly increasing, we mean that if $u_1\leq u_2$ and $u_1 \not\equiv u_2$ then $T(u_2)-T(u_1)\in C_{\phi_{1,s}}^+(\Om)$.

\begin{Theorem}\label{si}
The map $T: C_{\phi_{1,s}}(\Om) \to C_{\phi_{1,s}}(\Om)$ is strongly increasing.
\end{Theorem}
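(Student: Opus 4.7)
Let $u_1, u_2 \in C_{\phi_{1,s}}(\Om)$ with $u_1 \leq u_2$ and $u_1 \not\equiv u_2$, set $z_i := T(u_i)$ and $\hat z := z_2 - z_1$. Lemma~\ref{mi} already gives $\hat z > 0$ in $\Om$; my task is to upgrade this to $\hat z \geq c\,\phi_{1,s}$ in $\Om$ for some $c > 0$. Subtracting the equations satisfied by $z_1$ and $z_2$ and applying the mean value theorem to $t \mapsto t^{-q}$, exactly as in \eqref{mi3}, yields
\begin{equation}\label{si-eq}
(-\De)^s \hat z + V(x)\hat z = \tilde f(u_2) - \tilde f(u_1) =: g(x) \geq 0 \text{ in } \Om, \quad \hat z = 0 \text{ in } \mb R^n \setminus \Om,
\end{equation}
with $V(x) = \la f(0)\,q / \xi^{q+1}(x)$ for some $\xi$ between $z_1$ and $z_2$. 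Since $z_i \in C^+_{\phi_{1,s}}(\Om)$ we have $\xi(x) \geq c_0 \delta^s(x)$, hence $V \in L^1_{\mathrm{loc}}(\Om)$ with $0 \leq V(x) \lesssim \delta^{-s(q+1)}(x)$ near $\partial\Om$, a singularity tamed by the fractional Hardy inequality on test functions vanishing like $\delta^s$. Moreover $g \not\equiv 0$ because $\tilde f$ is strictly increasing.

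\emph{Interior bound.} Rewriting \eqref{si-eq} as $(-\De)^s \hat z = -V(x)\hat z + g(x)$ with non-positive coefficient $-V$ and a non-trivial non-negative source $g$, the strong maximum principle of Theorem~\ref{smp} together with a standard fractional Harnack inequality yields $\hat z \geq c_K > 0$ on every compact $K \subset \Om$.

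\emph{Boundary barrier.} This is the main obstacle: the singular potential $V \asymp \delta^{-s(q+1)}$ precludes a direct Hopf-type estimate. My plan is to compare $\hat z$ from below, in a tubular neighbourhood $\Om_r := \{x \in \Om : \delta(x) < r\}$, with a small multiple of the subsolution $v \in C^+_{\phi_{1,s}}(\Om_r)$ constructed in Section~6 through the nonlocal infinite semipositone problem $(I_\theta)$. That $v$ satisfies
\begin{equation*}
(-\De)^s v + \frac{\tilde C\,v}{\delta^{s(q+1)}(x)} \leq 0 \text{ in } \Om_r, \qquad c_1 \delta^s \leq v \leq c_2 \delta^s \text{ in } \Om_r,
\end{equation*}
with $\tilde C$ chosen to dominate the coefficient $V$ appearing in \eqref{si-eq}. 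Using the interior bound I fix $\mu > 0$ so small that $\mu v \leq \hat z$ on $\{\delta = r\}$; since both $\mu v$ and $\hat z$ vanish on $\mb R^n \setminus \Om$, a weak comparison argument in $\Om_r$ analogous to the one used in Proposition~\ref{well-defined} --- legitimate for the operator $(-\De)^s + V$ because Hardy's inequality together with $\hat z, v \asymp \delta^s$ makes the singular terms integrable against the test function $(\mu v - \hat z)^+$ --- gives $\hat z \geq \mu v \geq \mu c_1 \delta^s$ in $\Om_r$. Patching this with the interior bound yields $\hat z \geq c\,\phi_{1,s}$ in $\Om$, and hence $T$ is strongly increasing. \hfill{\QED}
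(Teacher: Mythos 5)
Your proof is correct and follows essentially the same route as the paper: reduce to $(-\De)^s\hat z + V\hat z\geq 0$ with $V\lesssim \delta^{-s(q+1)}$, take the solution $v$ of the infinite semipositone problem $(I_\theta)$ from Section~6 as a boundary barrier satisfying $(-\De)^s v + \tilde C v/\delta^{s(q+1)}\leq 0$ in a thin strip (which works because $\gamma>q$ makes the $-\theta v^{-\gamma}$ term dominate there), and patch with an interior lower bound via comparison. The only cosmetic difference is that the paper gets the interior bound simply from continuity and strict positivity of $\hat z$ on the compact set $\{\delta\geq\eta\}$ rather than invoking a Harnack inequality.
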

\begin{proof}
Let $u_1\leq u_2$ such that $u_1 \not\equiv u_2$ and $T(u_i)= z_i$ for $i=1,2$. Then from Lemma \ref{mi} we already know that $z_1 > z_2$ in $\Om$ and $z_2 -z_1 \in C_{\phi_{1,s}}(\Om)$. So it remains to prove that there exist a $k_1>0$ such that $k_1\delta^s(x)\leq (z_2-z_1)(x)$ in $\Om$. We know that $(z_2-z_1)$ satisfies \eqref{mi3} and since $z_1(x)\leq \xi(x)\leq z_2(x) $ in $\Om$ and each $z_i \in C_{\phi_{1,s}}(\Om)$, we can get a constant $k>0$ such that $\displaystyle \la f(0)\frac{q}{\xi^{q+1}} \leq \frac{k}{\delta^{s(q+1)}(x)}$ in $\Om$. Therefore if we set $\tilde z= (z_2-z_1)$ then we obtain
\begin{equation}\label{si1}
(-\De)^s\tilde z+\frac{k}{\delta^{s(q+1)}(x)}\tilde z \geq 0, \; \tilde z>0\; \text{in}\; \Om,\;\; \tilde z=0 \;\text{in}\; \mb R^n \setminus \Om.
\end{equation}
From Theorem \ref{isp}, we know that for sufficiently small $\theta>0$, there exists a $v \in C_{\phi_{1,s}}^+(\Om)$ which satisfies weakly
\[(-\De)^s v = v^{p}-\frac{\theta }{v^{\gamma}}, \; v>0 \; \text{in}\; \Om,\;\; v=0\; \text{in}\; \mb R^n \setminus \Om\]
where $\gamma \in (q,1)$ and $p \in (0,1)$. So there exist constants $m_1,m_2>0$ such that $m_1\delta^s(x) \leq v(x)\leq m_2\delta^s(x)$ in $\Om$. From this we get
\begin{equation}\label{si2}
(-\De)^sv +\frac{kv}{\delta^{s(q+1)}(x)} = v^p - \frac{\theta}{v^{\gamma}(x)} + \frac{kv}{\delta^{s(q+1)}(x)} \leq v^p -\frac{m_2^{-\gamma}\theta}{\delta^{s\gamma}(x)}+ \frac{m_2k}{\delta^{sq}(x)}\; \text{in}\; \Om.
\end{equation}
Since $\gamma \in (q,1)$, the term $\displaystyle \frac{m_2\theta}{\delta^{s\gamma}(x)}$ dominates near the boundary of $\Om$. We define $\Om_\eta = \{x \in \Om:\;  \delta(x)< \eta\}$ and choose $\eta>0$ small enough so that \eqref{si2} gives
\begin{equation}\label{si3}
(-\De)^sv +\frac{kv}{\delta^{s(q+1)}(x)} \leq 0 ,\; v>0 \; \text{in}\; \Om_\eta,\;\; v=0\; \text{in}\; \mb R^n \setminus \Om
\end{equation}
From \eqref{si1} we have
\begin{equation}\label{si4}
(-\De)^s\tilde z +\frac{k\tilde z}{\delta^{s(q+1)}(x)} \geq 0 ,\; \tilde z>0 \; \text{in}\; \Om_\eta,\;\; \tilde z=0\; \text{in}\; \mb R^n \setminus \Om
\end{equation}
We choose $m_3>0$ small enough such that $m_3 v \leq \tilde z$ in $\Om \setminus \Om_\eta$. Thus from \eqref{si3} and \eqref{si4} gives
\begin{equation*}\label{si5}
(-\De)^s(m_3v-\tilde z) +\frac{k(m_3v-\tilde z)}{\delta^{s(q+1)}(x)} \leq 0\; \text{in} \; \Om_\eta,\; (m_3v-\tilde z)\leq 0 \; \text{in}\; \mb R^n \setminus \Om_\eta.
\end{equation*}
By comparison principle we get $m_1m_3 \delta^s(x)\leq m_3 v \leq \tilde z$ in $\Om_\eta$. Since $0 < \tilde z\in C_{\phi_{1,s}}(\Om)$, we get $\inf\limits_{x \in\Omega\backslash\Om_\eta} \tilde z>0$. Hence there must exist a constant $k_1>0$ such that $k_1\phi_{1,s}(x) \leq \tilde z $ in $\Om$. This proves that $(z_2-z_1)\in C_{\phi_{1,s}}^+(\Om)$ and the map $T$ is strongly increasing on $C_{\phi_{1,s}}(\Om)$.\hfill{\QED}
\end{proof}

We recall a fixed point theorem by Amann \cite{amann} which will help us to get the desired result.

\begin{Theorem}\label{fix-pt-thrm}
Let $X$ be a retract of some Banach space and $f : X\to X$ be a compact map. Suppose that $X_1$ and $X_2$ are disjoint subsets of $X$ and let $U_k$, $k=1,2$ be open subsets of $X$ such that $U_k\subset X_k$, $k=1,2$. Moreover, suppose that $f(X_k)\subset X_k$ and that $f$ has no fixed points on $X_k \setminus U_k$, $k=1,2$. Then $f$ has at least three distinct fixed points $x_1,x_2,x_3$ with $x_k\in X_k$, $k=1,2$ and $x \in X\setminus (X_1\cup X_2)$.
\end{Theorem}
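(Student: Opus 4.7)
The plan is to invoke the Leray--Schauder fixed point index for compact maps on retracts. For $X$ a retract of a Banach space and $f:X\to X$ a compact map, to every open $V\subset X$ on whose relative boundary $f$ has no fixed points one associates an integer $i(f,V,X)$ enjoying normalization, additivity, excision and homotopy invariance. I would first record $i(f,X,X)=1$, which is obtained by homotoping $f$ inside $X$ to a constant map (admissible since $X$ is a retract of a Banach space and the homotopy stays in $X$).

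Next, for each $k\in\{1,2\}$ I would establish $i(f,U_k,X)=1$. The hypothesis $f(X_k)\subset X_k$ lets us treat $X_k$ as an $f$-invariant subset; assuming $X_k$ is itself a retract of $X$ (standard in Amann's framework, and in the intended application to $(P_\la)$ realized by taking the $X_k$ to be order intervals in the ordered Banach space $C_{\phi_{1,s}}(\Om)$), the restriction $f|_{X_k}$ carries its own index $i(f|_{X_k},U_k,X_k)$. The ``no fixed points on $X_k\setminus U_k$'' assumption places every fixed point of $f|_{X_k}$ inside $U_k$, so normalization within $X_k$ gives $i(f|_{X_k},U_k,X_k)=1$. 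Amann's commutativity principle (the index is invariant under restriction to an invariant subretract) then identifies this with the ambient index, producing $i(f,U_k,X)=1$.

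Since $X_1$ and $X_2$ are disjoint, so are $\overline{U_1}$ and $\overline{U_2}$. Setting $V:=X\setminus(\overline{U_1}\cup\overline{U_2})$, the only candidate fixed points on the relative boundary of $V$ lie on $\partial U_k$ and are excluded by hypothesis; additivity therefore yields
\[
1=i(f,X,X)=i(f,U_1,X)+i(f,U_2,X)+i(f,V,X)=2+i(f,V,X),
\]
so $i(f,V,X)=-1\neq 0$, and $f$ has a fixed point $x_3\in V$. If $x_3\in X_k$ for some $k$, then by hypothesis $x_3\in U_k\subset \overline{U_k}$, contradicting $x_3\in V$; hence $x_3\in X\setminus(X_1\cup X_2)$. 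Combined with fixed points $x_1\in U_1\subset X_1$ and $x_2\in U_2\subset X_2$ furnished by the non-zero indices on $U_1$ and $U_2$, this yields the three required distinct fixed points.

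The main obstacle is the identification $i(f,U_k,X)=1$: it rests both on $X_k$ being a retract of $X$ (so that an intrinsic index for $f|_{X_k}$ exists) and on the commutativity of the index under restriction to an invariant subretract. This is a standard but delicate lemma in Amann's setting, and in the application of Section~4 it is validated by choosing the $X_k$ as order intervals, which are retracts via the natural order-projection onto $[\underline u,\overline u]$.
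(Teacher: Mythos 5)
The paper offers no proof of this statement: it is quoted from Amann's survey and used as a black box, so there is no internal argument to compare against. Your proposal is, in substance, exactly Amann's own proof of his three-solution theorem: normalization gives $i(f,X,X)=1$; excision and normalization inside the invariant set $X_k$ give $i(f|_{X_k},U_k,X_k)=i(f|_{X_k},X_k,X_k)=1$, which the permanence (commutativity) property of the index promotes to $i(f,U_k,X)=1$; and additivity then forces $i\bigl(f,X\setminus(\overline{U_1}\cup\overline{U_2}),X\bigr)=-1$, producing the third fixed point outside $X_1\cup X_2$. This is correct.

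You rightly flag the one genuine issue: as transcribed here, $X_1,X_2$ are only ``disjoint subsets,'' which is not enough --- both the existence of the intrinsic index $i(f|_{X_k},U_k,X_k)$ and the permanence property require each $X_k$ to be a retract of $X$. The same hypothesis also rescues two smaller steps you use implicitly: a retract of a Hausdorff space is closed, so $\overline{U_k}\subset X_k$, whence $\overline{U_1}\cap\overline{U_2}=\emptyset$ and $\partial U_k\subset X_k\setminus U_k$ carries no fixed points (both needed for the additivity step). With ``disjoint retracts of $X$'' restored --- as in Amann's original statement, and as satisfied in Section 4, where the $X_k$ are closed order intervals in $C_{\phi_{1,s}}(\Om)$ --- your argument is complete modulo the standard properties of the fixed point index.
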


We also recall Corollary $6.2$ of \cite{amann}.

\begin{Lemma}\label{cor6.2-amann}
Let $X$ be an ordered Banach space and $[y_1,y_2]$ be an ordered interval in $X$. Let $f:[y_1,y_2]\to X$ is an increasing compact map such that $f(y_1)\geq y_1$ and $f(y_2)\leq y_2$. Then $f$ has a minimal fixed point $\underline x$ and a maximal fixed point $\overline x$.
\end{Lemma}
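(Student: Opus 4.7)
The plan is a standard monotone iteration argument, which under the ordered-Banach-space hypothesis converges in norm thanks to the compactness of $f$.

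First I would verify that $f$ actually maps the order interval $[y_1,y_2]$ into itself. Take $x \in [y_1,y_2]$; monotonicity of $f$ gives $f(y_1) \leq f(x) \leq f(y_2)$, and combined with the boundary hypotheses $y_1 \leq f(y_1)$ and $f(y_2) \leq y_2$ one obtains $y_1 \leq f(x) \leq y_2$. Hence the iteration $x_0 := y_1$, $x_{n+1} := f(x_n)$ is well defined and stays inside $[y_1,y_2]$. The assumption $f(y_1)\geq y_1$ gives $x_1 \geq x_0$, and by induction on $n$ using monotonicity of $f$ the sequence $\{x_n\}$ is non-decreasing.

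Next I would extract a norm limit. Because $\{x_n\} \subset [y_1,y_2]$, the sequence is order bounded and hence norm bounded (the positive cone being normal in Amann's framework). Compactness of $f$ then implies that $\{f(x_n)\} = \{x_{n+1}\}$ has a convergent subsequence. A standard fact in ordered Banach spaces with normal cone says that a monotone sequence possessing a convergent subsequence converges in norm; denote the limit by $\underline x \in [y_1,y_2]$. Continuity of $f$ yields $\underline x = \lim x_{n+1} = \lim f(x_n) = f(\underline x)$, so $\underline x$ is a fixed point.

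For the minimality, let $x^*$ be any fixed point of $f$ in $[y_1,y_2]$. Then $x^* \geq y_1 = x_0$, and assuming inductively that $x^* \geq x_n$ I get $x^* = f(x^*) \geq f(x_n) = x_{n+1}$; passing to the limit in $n$ gives $x^* \geq \underline x$, so $\underline x$ is minimal. The maximal fixed point $\overline x$ is constructed by the mirror-image iteration $y_0 := y_2$, $y_{n+1} := f(y_n)$, which is non-increasing in $[y_1,y_2]$ thanks to $f(y_2)\leq y_2$; the same compactness-plus-monotonicity argument yields a norm limit $\overline x$ that is a fixed point, and the symmetric induction $x^* \leq y_n$ for any fixed point $x^*$ shows maximality. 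The only delicate step, and the one I would flag as the main obstacle, is the passage from the subsequential limit to norm convergence of the full monotone sequence; this is where the assumption that the cone of $X$ is normal (implicit in Amann's "ordered Banach space") is essential, since otherwise a monotone bounded sequence with a convergent subsequence need not converge.
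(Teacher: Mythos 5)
This lemma is not proved in the paper at all: it is quoted verbatim as Corollary~6.2 of Amann \cite{amann}, so there is no in-paper argument to compare against. Your monotone-iteration proof is correct and is essentially the classical argument behind Amann's result (Theorem~6.1/Corollary~6.2 there): the invariance $f([y_1,y_2])\subset[y_1,y_2]$, the increasing iteration from $y_1$ and the decreasing one from $y_2$, convergence via compactness, and the inductive comparison with an arbitrary fixed point are all exactly the standard steps. One refinement on the step you flag as delicate: normality of the cone is not actually needed to pass from a convergent subsequence to convergence of the whole sequence. Since the entire sequence $\{x_{n+1}\}=\{f(x_n)\}$ lies in the relatively compact set $f([y_1,y_2])$, it suffices to show that all subsequential limits coincide; if $x_{n_k}\to x$ and $x_{m_j}\to x'$, then monotonicity together with the closedness of the positive cone gives $x\le x'$ and $x'\le x$, hence $x=x'$ because the cone is proper, and relative compactness then forces the full sequence to converge. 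Your normality-based argument is also valid (and is the one most textbooks use), but the compactness route shows the lemma holds in an arbitrary ordered Banach space, which is the generality in which Amann states it.
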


Now the proof of the main result goes as follows.\\

\noi \textbf{Proof of Theorem \ref{3sol}:} To obtain solutions of $(P_\la)$ or  equivalently $(\tilde P_\la)$, it is enough to find fixed points of the map $T$, thanks to Proposition \ref{fixed-pt}. We define the sets $X= [\zeta_1,\vartheta_1]$, $X_1=[\zeta_1,\vartheta_2]$ and $X_2= [\zeta_2, \vartheta_1]$. Since $X$ and $X_i's$ for each $i=1,2$ are non empty closed and convex subsets of $C_{\phi_{1,s}}(\Om)$, they form retracts of $C_{\phi_{1,s}}(\Om)$. By construction (done in section 2), we know that $X_1\cap X_2 = \emptyset$ in $X$. Since $\zeta_1$ and $\vartheta_1$ are ordered sub and supersolutions of $(P_\la)$ respectively  and $T$ is strictly increasing (Lemma \ref{mi}), by comparison principle we obtain
\[\zeta_1 \leq T(\zeta_1)\leq T(\vartheta_1)\leq \vartheta_1.\]
This implies that $T(X) \subset X$ and similarly it also holds that $T(X_k)\subset X_k$ for $k=1,2$.  Because of Proposition \ref{comp} and Theorem \ref{si}, we get that $T: X\to X$ is compact and a strongly increasing map. It has been proved that $\vartheta_2$ is a strict supersolution of $(P_\la) $ and $T(\vartheta_2)\leq \vartheta_2$. So using Theorem \ref{smp} we infer that $T(\vartheta_2)<\vartheta_2$, $T(\vartheta_1)<\vartheta_1$, $T(\zeta_1)>\zeta_1$ and $T(\zeta_2)<\zeta_2$. Therefore Lemma \ref{cor6.2-amann} implies that $T$ has a maximal fixed point $u_1\in X_1$ such that $u_1 \in (\zeta_1,\vartheta_2)$ and a minimal fixed point $u_2\in X_2$ such that $u_2\in (\zeta_2, \vartheta_1)$. Now repeating the arguments in Theorem \ref{si}, we can prove that there exist constants $a_1,a_2>0$ such that
\[u_1\geq a_1\phi_{1,s}+\zeta_1,\;\vartheta_2 - u_1 \geq a_1 \phi_{1,s}, \;u_2+a_2\phi_{1,s} \leq \vartheta_1\; \text{and}\; u_2 -\zeta_2 \geq a_2 \phi_{1,s}\; \text{in}\; \Om.\]
We define the open ball $B$ in $X$ as
\[B:= X\cap \left\{\varphi \in C_{\phi_{1,s}}(\Om):\; \left\|\frac{\varphi}{\phi_{1,s}}\right\|_{L^\infty(\Om)}< a \right\} \;\text{with}\; a=\min(a_1, a_2).\]
Then for each $i=1,2$, $u_i+B\subset X_i$ and thus $X_i$'s have non empty interior. We construct open balls around each fixed point of $T$ in $X_i$ for each $i=1,2$ and take $U_i$ as the largest open set in $X_i$ containing all these open balls and such that $X_i \setminus U_i$ contains no fixed point of $T$. Now applying Theorem \ref{fix-pt-thrm}, we get the existence of third fixed point $u_3$ of $T$ lying in $X\setminus (X_1\cap X_2)$. This completes the proof.\hfill{\QED}

\section{Uniqueness for large $\la$ }
In this section, we prove that $(P_\la)$ admits a unique solution when $\la$ is sufficiently large.
%and $q \in \left(0,\displaystyle\frac12\right)$.
We assume only that $f \in C^1([0,\infty))$ satisfies (f1)-(f3) and\\
(f5) there exists a $\alpha>0$ such that $\displaystyle \frac{f(u)}{u^q}$ is decreasing for $u >\alpha$.\\

\begin{Theorem}\label{exist-sol}
The problem $(P_\la)$ admits a solution for each $\la >0$.
\end{Theorem}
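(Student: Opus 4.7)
\noindent The plan is to apply the sub-supersolution method. I would first extend the first pair of sub and supersolutions constructed in Section 3 to arbitrary $\la > 0$: set $\zeta_\la := m_\la\phi_{1,s}$ with $m_\la > 0$ chosen small enough that $m_\la\la_{1,s}\phi_{1,s} \leq \la f(m_\la\phi_{1,s})/(m_\la\phi_{1,s})^q$ in $\Om$ (possible since (f1) forces $f(t)/t^q \to +\infty$ as $t \to 0^+$), and $\vartheta_\la := M_\la w$ with $w$ solving \eqref{psp} and $M_\la > 0$ large enough that $M_\la^{q+1} \geq \la f(M_\la \|w\|_{L^\infty(\Om)})$ (possible by (f2), with (f3) then yielding the supersolution inequality exactly as in Section 3). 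Shrinking $m_\la$ once more ensures $\zeta_\la \leq \vartheta_\la$ in $\Om$, both being of order $\delta^s$.

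\noindent To extract a solution in $[\zeta_\la, \vartheta_\la]$, I would truncate the nonlinearity and minimize the associated energy. Define
\[
g(x, t) := \la\, \frac{f(\pi(x, t))}{\pi(x, t)^q}, \qquad \pi(x, t) := \max\{\zeta_\la(x),\, \min\{t, \vartheta_\la(x)\}\},
\]
together with the functional
\[
J(u) := \tfrac{1}{2}\|u\|^2 - \int_\Om G(x, u)\,\mathrm{d}x, \qquad G(x, u) := \int_0^u g(x, s)\,\mathrm{d}s,
\]
on $\tilde H^s(\Om)$. The pointwise bound $|g(x, t)| \leq \la f(\|\vartheta_\la\|_{L^\infty(\Om)})/\zeta_\la(x)^q \leq C/\delta^{sq}(x)$, combined with Cauchy--Schwarz, the fractional Hardy inequality, and the fact that $2s(1-q) > 0$, yields $\int_\Om |G(x, u)|\,\mathrm{d}x \leq C\|u\|$, so $J$ is coercive. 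Weak lower semicontinuity follows from the Rellich embedding together with Vitali's theorem (the same Hardy-type bound provides the required uniform integrability). Hence $J$ attains its infimum at some $u \in \tilde H^s(\Om)$ that weakly solves $(-\De)^s u = g(x, u)$ in $\Om$.

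\noindent Finally, I would verify $\zeta_\la \leq u \leq \vartheta_\la$ in $\Om$ by a standard comparison argument: on $\{u < \zeta_\la\}$ the definition of $\pi$ gives $g(x, u) = \la f(\zeta_\la)/\zeta_\la^q \geq (-\De)^s\zeta_\la$, so testing the resulting $(-\De)^s(u - \zeta_\la) \geq 0$ against $(u - \zeta_\la)^-$ and using the pointwise inequality $[v(x)-v(y)][v^-(x)-v^-(y)] \leq -|v^-(x)-v^-(y)|^2$ with $v = u - \zeta_\la$ forces $(u - \zeta_\la)^- \equiv 0$; the upper bound is symmetric. On $[\zeta_\la, \vartheta_\la]$, $g(x, u)$ coincides with $\la f(u)/u^q$, so $u$ weakly solves $(P_\la)$, and the regularity $u \in C_{\phi_{1,s}}^+(\Om)$ follows exactly as in Proposition \ref{well-defined}. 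The main technical point is the boundary singularity of $g$ inherited from the vanishing of $\zeta_\la$ at $\partial\Om$, which is handled precisely by the fractional Hardy inequality and $sq < 1$, in the spirit of Section 4.
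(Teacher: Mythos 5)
Your argument is correct, but it follows a genuinely different route from the paper. The paper's proof is essentially a one-liner that recycles the machinery of Section~4: it takes the same first pair $(\zeta_1,\vartheta_1)=(m_\la\phi_{1,s},M_\la w)$, observes that the fixed-point operator $T$ (already shown to be well defined, compact and strictly increasing in Propositions \ref{well-defined}, \ref{comp} and Lemma \ref{mi}) maps the ordered interval $X=[\zeta_1,\vartheta_1]$ into itself, and invokes Amann's Lemma \ref{cor6.2-amann} plus Proposition \ref{fixed-pt}. You instead bypass the operator $T$ entirely and run a direct variational truncation argument: minimize the truncated energy $J$, using Cauchy--Schwarz and the fractional Hardy inequality to control the boundary singularity $|g(x,t)|\le C\delta^{-sq}(x)$ (exactly the device the paper uses in Section~4 to extend the admissible test functions), then recover the order $\zeta_\la\le u\le\vartheta_\la$ by testing against $(u-\zeta_\la)^-$ with the standard pointwise inequality. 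All the steps check out: coercivity, weak lower semicontinuity via uniform integrability, the Euler--Lagrange equation (differentiability of the singular term again follows from the Hardy bound and dominated convergence), and the comparison step, which only uses that $g(x,u)$ equals the frozen subsolution/supersolution nonlinearity on the relevant sets. What the two approaches buy is slightly different: yours is self-contained and does not need the compactness or strong monotonicity of $T$, but it produces only \emph{some} solution in $[\zeta_1,\vartheta_1]$, whereas the paper's route via Lemma \ref{cor6.2-amann} yields extremal (minimal and maximal) fixed points in the interval --- and the existence of a \emph{minimal} solution is precisely what Corollary \ref{min-sol} and the uniqueness proof of Theorem \ref{uniq} rely on afterwards. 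So your proof establishes the theorem as stated, but if it were to replace the paper's, Corollary \ref{min-sol} would still need the monotone-operator argument (or a separate Zorn-type construction) to extract the minimal solution.
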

\begin{proof}
We recall the first pair of sub-supersolution $(\zeta_1,\vartheta_1)$ of $(P_\la)$ constructed in section 2. Without loss of generality, we can assume that $ f$ is non decreasing in $[\zeta_1, v_1]$. As in the proof of Theorem \ref{3sol}, we can show that if $X=[\zeta_1,\vartheta_1]$ then $T:X\to X$ is strictly increasing, compact map and $T(X)\subset X$. Therefore we can apply Lemma  \ref{cor6.2-amann} to conclude that $T$ has a fixed point $u_\la$ in $X$. Then Proposition \ref{fixed-pt} gives us that $u_\la\in C_{\phi_{1,s}}^+(\Om)$ is a solution of $(P_\la)$ which completes the proof.\hfill{\QED}
\end{proof}

\begin{Lemma}\label{lwr-bd}
Any positive solution $u_\la $ in $C_{\phi_{1,s}}^+(\Om)$ of $(P_\la)$ satisfies $u_\la \geq \Theta_\la w$ in $\Om$ where $\Theta_\la = (\la f(0))^{\frac{1}{1+q}}$ and $w$ is the solution to \eqref{psp}.
\end{Lemma}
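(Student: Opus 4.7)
\textbf{Proof plan for Lemma \ref{lwr-bd}.} The plan is to compare $u_\la$ with $\Theta_\la w$ via a weak comparison argument, exploiting the monotonicity of $t\mapsto t^{-q}$. First I would observe two facts. By (f1) and (f3), $f(t)\geq f(0)$ for all $t\geq 0$, so any positive solution $u_\la\in C_{\phi_{1,s}}^+(\Om)$ of $(P_\la)$ satisfies
\[
(-\De)^s u_\la \;=\; \la\,\frac{f(u_\la)}{u_\la^q} \;\geq\; \la\,\frac{f(0)}{u_\la^q}\qquad\text{in }\Om.
\]
On the other hand, since $w$ solves \eqref{psp}, a direct computation gives
\[
(-\De)^s(\Theta_\la w) \;=\; \Theta_\la\,\frac{1}{w^q} \;=\; \frac{\Theta_\la^{q+1}}{(\Theta_\la w)^q} \;=\; \frac{\la f(0)}{(\Theta_\la w)^q}\qquad\text{in }\Om,
\]
thanks to the choice $\Theta_\la=(\la f(0))^{1/(q+1)}$. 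Hence the difference $\psi:=\Theta_\la w-u_\la\in \tilde H^s(\Om)$ weakly satisfies
\[
(-\De)^s \psi \;\leq\; \la f(0)\left(\frac{1}{(\Theta_\la w)^q}-\frac{1}{u_\la^q}\right)\qquad\text{in }\Om.
\]

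Next I would test this inequality against $\psi^+=(\Theta_\la w-u_\la)^+\in \tilde H^s(\Om)$. Admissibility is guaranteed because both $w$ and $u_\la$ belong to $C_{\phi_{1,s}}^+(\Om)$, so each behaves like $\delta^s$ up to the boundary; thus $\psi^+/u_\la^q\in L^1(\Om)$ (the ratio is bounded by $C\delta^{s(1-q)}$), and $\psi^+/(\Theta_\la w)^q$ is handled similarly, so the right hand side integral is finite and the Hardy-type justification used earlier in the paper applies. Using the elementary inequality $(\psi(x)-\psi(y))(\psi^+(x)-\psi^+(y))\geq (\psi^+(x)-\psi^+(y))^2$ valid for all real numbers, the quadratic form on the left dominates $\|\psi^+\|^2$. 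On the right, on the set $\{\Theta_\la w>u_\la\}$ we have $(\Theta_\la w)^{-q}<u_\la^{-q}$ because $t\mapsto t^{-q}$ is strictly decreasing on $(0,\infty)$, so the integrand is nonpositive. Combining,
\[
\|\psi^+\|^2 \;\leq\; \la f(0)\int_{\{\Theta_\la w>u_\la\}}\!\!\left(\frac{1}{(\Theta_\la w)^q}-\frac{1}{u_\la^q}\right)\psi^+\,\mathrm{d}x \;\leq\; 0,
\]
which forces $\psi^+\equiv 0$, i.e.\ $u_\la\geq \Theta_\la w$ in $\Om$.

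The only real obstacle is the technical one of justifying the weak testing against $\psi^+$ in the presence of the singular right hand sides $(\Theta_\la w)^{-q}$ and $u_\la^{-q}$. This is exactly the kind of issue addressed previously in the paper via Hardy's inequality and the $C_{\phi_{1,s}}^+$ framework, so it should be routine to invoke those arguments here rather than redo them. All other steps are either direct computations or standard sign considerations, so I do not anticipate any further difficulty.
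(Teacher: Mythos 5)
Your proposal is correct and follows essentially the same route as the paper: the paper likewise notes $(-\De)^s u_\la \geq \la f(0)/u_\la^q$ by (f1),(f3), computes $(-\De)^s(\Theta_\la w)=\la f(0)/(\Theta_\la w)^q$ from the scaling of \eqref{psp}, and then simply invokes the weak comparison principle (the testing-with-$\psi^+$ argument you spell out is exactly the one carried out earlier in Lemma \ref{mi} and Proposition \ref{well-defined}). No gaps.
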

\begin{proof}
Let $u_\la$ solves $(P_\la)$. Since $f$ is nondecreasing, we get
\begin{equation*}
(-\De)^su_\la(x) = \frac{\la f(u_\la(x))}{u_\la^q(x)} \geq \frac{\la f(0)}{u_\la^q(x)}\quad \mbox{in }\Omega
\end{equation*}
and
\begin{equation*}
(-\De)^s (\Theta_\lambda w)(x) =  \frac{\la f(0)}{(\Theta_\lambda w)^q(x)}\quad \mbox{in }\Omega.
\end{equation*}
 Therefore by weak comparison principle (see Lemma~\ref{mi}), we conclude that $u_\la- \Theta_\lambda w\geq 0$ in $\mb R^n$ .\hfill{\QED}
\end{proof}

\begin{Corollary}\label{min-sol}
There exist a minimal solution of $(P_\la)$ in $C_{\phi_{1,s}}^+(\Om)$, for each $\la>0$.
\end{Corollary}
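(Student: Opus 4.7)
\noindent \textbf{Proof proposal for Corollary~\ref{min-sol}.}
The plan is to combine the already-developed machinery of the operator $T$ with the universal lower bound of Lemma~\ref{lwr-bd} by running a monotone iteration from the bottom of the solution cone. Concretely, I would build an increasing sequence of iterates of $T$ starting from the explicit subsolution $\Theta_\la w$, show via the compactness of $T$ (Proposition~\ref{comp}) and its monotonicity (Lemma~\ref{mi}) that the sequence converges to a fixed point $u_*$ of $T$, and then use Lemma~\ref{lwr-bd} to compare $u_*$ with an arbitrary positive solution.

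First I would verify that $\Theta_\la w$ is a (weak) subsolution of $(P_\la)$: by \eqref{psp},
\[
(-\De)^s(\Theta_\la w) \;=\; \frac{\Theta_\la^{1+q}}{(\Theta_\la w)^q} \;=\; \frac{\la f(0)}{(\Theta_\la w)^q} \;\leq\; \la\,\frac{f(\Theta_\la w)}{(\Theta_\la w)^q},
\]
using (f3). Next, the supersolution $\vartheta_1=M_\la w$ constructed at the start of Section~3 is available; moreover $\Theta_\la\leq M_\la$, because $\Theta_\la^{1+q}=\la f(0)\leq \la f(M_\la\|w\|_{L^\infty})\leq M_\la^{1+q}$ by (f3) and the defining inequality of $M_\la$. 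Hence $\Theta_\la w\leq M_\la w$, and by the same testing-with-negative-part argument used in Lemma~\ref{mi} one gets $T(\Theta_\la w)\geq \Theta_\la w$ and $T(M_\la w)\leq M_\la w$.

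With these ingredients in hand, the order interval $[\Theta_\la w,M_\la w]\subset C_{\phi_{1,s}}(\Om)$ is invariant under $T$, and $T$ restricted to this interval is an increasing compact map. Applying Lemma~\ref{cor6.2-amann} (or equivalently iterating $u_{n+1}=T(u_n)$ with $u_0=\Theta_\la w$, noting that the sequence is monotone increasing, bounded above by $M_\la w$, and precompact by Proposition~\ref{comp}) yields a minimal fixed point $u_*\in[\Theta_\la w,M_\la w]$ of $T$, which by Proposition~\ref{fixed-pt} is a positive solution of $(P_\la)$ in $C_{\phi_{1,s}}^+(\Om)$. To upgrade minimality from the interval to all of $C_{\phi_{1,s}}^+(\Om)$, I would take any positive solution $u_\la$; Lemma~\ref{lwr-bd} gives $u_\la\geq \Theta_\la w=u_0$, and inductively $u_n=T^n(u_0)\leq T^n(u_\la)=u_\la$ by the monotonicity of $T$, so passing to the limit $u_*\leq u_\la$.

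The proof is essentially routine given Section~4, and I do not anticipate a serious obstacle; the only point that deserves care is the comparison step $T(\Theta_\la w)\geq \Theta_\la w$. This is not automatic from the subsolution inequality for $(P_\la)$ because $T$ is defined through the reformulation $(S_\la)$; one must compute $(-\De)^s(T(\Theta_\la w)-\Theta_\la w)\geq \la f(0)\bigl(1/(T(\Theta_\la w))^q-1/(\Theta_\la w)^q\bigr)$ and test against $(T(\Theta_\la w)-\Theta_\la w)^-$, exactly as in the proof of Lemma~\ref{mi}, to conclude that the negative part vanishes.
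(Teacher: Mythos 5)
Your proposal is correct and follows essentially the same route as the paper: the paper's proof simply invokes Theorem~\ref{exist-sol}, Lemma~\ref{lwr-bd} and Amann's Lemma~\ref{cor6.2-amann}, and what you have written is a careful unpacking of exactly that mechanism, with $\Theta_\la w$ used as the explicit bottom barrier of the order interval. Your attention to the step $T(\Theta_\la w)\geq \Theta_\la w$ (which must be checked through the reformulation $(S_\la)$ rather than read off from the subsolution inequality for $(P_\la)$) and to upgrading minimality from the interval to all of $C_{\phi_{1,s}}^+(\Om)$ via Lemma~\ref{lwr-bd} and the monotone iteration is precisely the content the paper leaves implicit.
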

\begin{proof}
From Theorem \ref{exist-sol} we know that $(P_\la)$ has a solution $u_\la \in C_{\phi_{1,s} }^+(\Om)$ such that
\[\zeta_1 \leq u_\la \leq \vartheta_1\;\text{in}\; \Om\]
where both $\zeta_1,\vartheta_1 \in C_{\phi_{1,s}}^+(\Om)$. Now the result follows from Lemma~\ref{lwr-bd} and Lemma~\ref{cor6.2-amann}.\hfill{\QED}
\end{proof}

\begin{Theorem}\label{uniq}
There exist a $\lambda^*>0$ such that $(P_\la)$ has a unique solution when $\la > \la^*$.
% and $q \in \left(0,\displaystyle \frac12\right)$.
\end{Theorem}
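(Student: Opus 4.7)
The plan is to argue by contradiction, comparing an arbitrary solution $u_\la \in C_{\phi_{1,s}}^+(\Om)$ of $(P_\la)$ with the minimal solution $\underline u_\la$ provided by Corollary~\ref{min-sol}. By minimality, $\underline u_\la \leq u_\la$ in $\Om$, and the aim is to show that once $\la$ is large enough this ordering forces $u_\la \equiv \underline u_\la$.

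First, I would test the weak formulation of $(P_\la)$ satisfied by $\underline u_\la$ against $u_\la$, and the one satisfied by $u_\la$ against $\underline u_\la$, then subtract. The two solutions are admissible test functions for each other's equation despite the singular factor $u^{-q}$: the two-sided bounds $c_1 \de^s \leq \underline u_\la \leq u_\la \leq c_2 \de^s$ combined with the fractional Hardy inequality, exactly as in the remark after the weak formulation of $(\tilde P_\la)$ in Section~4, guarantee that the singular integrals $\int_\Om f(\underline u_\la) u_\la / \underline u_\la^q\,\mathrm{d}x$ and $\int_\Om f(u_\la) \underline u_\la / u_\la^q\,\mathrm{d}x$ are finite. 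The symmetry of the Gagliardo bilinear form cancels the nonlocal terms, yielding the identity
\begin{equation*}
\la \int_\Om u_\la\,\underline u_\la \left[ \frac{f(\underline u_\la)}{\underline u_\la^{q+1}} - \frac{f(u_\la)}{u_\la^{q+1}} \right] \mathrm{d}x = 0. \qquad (\star)
\end{equation*}

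By (f5), $u \mapsto f(u)/u^q$ --- and hence $u \mapsto f(u)/u^{q+1}$ --- is strictly decreasing on $(\al, \infty)$. Therefore on the open set $A_\la := \{x \in \Om : \underline u_\la(x) > \al\}$ the integrand in $(\star)$ is pointwise $\geq 0$, and strictly so wherever $\underline u_\la < u_\la$. Lemma~\ref{lwr-bd} gives $\underline u_\la \geq \Theta_\la w \geq c_0 \Theta_\la \de^s$ with $\Theta_\la = (\la f(0))^{1/(q+1)} \to \infty$, so the complementary set $B_\la := \Om \setminus A_\la$ lies in a boundary strip of thickness $O(\Theta_\la^{-1/s}) \to 0$. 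Estimating the $B_\la$-contribution to $(\star)$ by combining $f(\underline u_\la), f(u_\la) \leq f(\al)$ on $B_\la$, the $\de^s$-bounds on both solutions, and the fractional Hardy inequality (controlling $\int_\Om \varphi^2/\de^{2s}$ by $\|\varphi\|^2$) shows this piece to be $o_\la(1)$ as $\la \to \infty$. Consequently, for $\la$ large $(\star)$ forces the integrand to vanish a.e.\ on $A_\la$, which by continuity gives $u_\la \equiv \underline u_\la$ on $A_\la$.

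To promote this equality from $A_\la$ to all of $\Om$, set $v := u_\la - \underline u_\la \geq 0$. Since $v \equiv 0$ on $A_\la$ and $v \equiv 0$ outside $\Om$, the pointwise expression for the fractional Laplacian at any $x \in A_\la$ collapses to
\begin{equation*}
(-\De)^s v(x) = -\,2 C_s^n \int_{B_\la} \frac{v(y)}{|x-y|^{n+2s}}\,\mathrm{d}y.
\end{equation*}
On the other hand, at $x \in A_\la$ the right-hand side of $(-\De)^s v = \la\bigl[f(u_\la)/u_\la^q - f(\underline u_\la)/\underline u_\la^q\bigr]$ vanishes, so the nonnegative integral above must be zero; since the kernel is strictly positive and $v \geq 0$ on $B_\la$, this forces $v \equiv 0$ on $B_\la$ as well. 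Hence $u_\la \equiv \underline u_\la$ in $\Om$, proving uniqueness. The chief obstacle is the $\la$-uniform estimate of the boundary-strip integral, and this is precisely where the local method of \cite{CES} breaks down and the fractional Hardy inequality, as announced in the introduction, becomes indispensable.
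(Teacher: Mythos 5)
Your cross-testing identity $(\star)$ is correct, and your splitting of $\Om$ into $A_\la$ and its complement mirrors the paper's $\Om_0$ versus $\Om\setminus\Om_0$, but the argument has a genuine logical gap at the decisive step. Write $(\star)$ as $I_{A_\la}+I_{B_\la}=0$ with $I_{A_\la}\geq 0$. An estimate $|I_{B_\la}|=o_\la(1)$ only yields $I_{A_\la}=o_\la(1)$, i.e.\ \emph{small}, not zero, so the assertion that ``for $\la$ large $(\star)$ forces the integrand to vanish a.e.\ on $A_\la$'' does not follow. The root cause is that by testing each equation against the \emph{other} solution you let the symmetry of the Gagliardo form cancel the nonlocal terms entirely, leaving no coercive quantity against which the boundary-strip contribution can be absorbed. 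The paper instead subtracts the two equations and tests with $u_\la-\bar u_\la$ itself: this keeps $C^n_s\|u_\la-\bar u_\la\|^2$ on the left-hand side; the set where $f_1\leq 0$ contributes nonpositively to the right; and on the strip $\Om_0\subset\{\delta^s\leq \alpha/(\Theta_\la k_1)\}$ the right-hand side is controlled, using (f5), the lower bound $u_\la\geq \Theta_\la k_1\delta^s$ and Hardy's inequality, by $C(\la)\|u_\la-\bar u_\la\|^2$ with $C(\la)=O\bigl(\la^{-\frac{1-q}{1+q}}\bigr)$. Choosing $\la$ large so that $C(\la)<1$ forces $\|u_\la-\bar u_\la\|=0$. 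That absorption argument is exactly what your symmetric cancellation destroys, and without it the contradiction never materializes.

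Two secondary remarks. First, your bound on the $B_\la$-piece tacitly invokes an upper bound $u_\la\leq c_2\delta^s$ with $c_2$ uniform over \emph{all} solutions at level $\la$; no such a priori bound is established (the paper never needs one, since every term on its right-hand side is measured against $\|u_\la-\bar u_\la\|^2$ via Hardy). Second, your final step promoting $u_\la\equiv\underline u_\la$ from $A_\la$ to $B_\la$ through the pointwise formula $(-\De)^s v(x)=-2C^n_s\int_{B_\la}v(y)|x-y|^{-n-2s}\,\mathrm{d}y$ is a sound observation, but it presupposes the equality on $A_\la$ that was never established, so it cannot repair the gap above.
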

\begin{proof}
Let $u_\la$ and $\bar u_\la$ be two distinct positive solutions of $(P_\la)$ in $C_{\phi_{1,s}}^+(\Om)$ such that $u_\la$ is the minimal solution as obtained from Corollary~\ref{min-sol}. So it holds that $u_\la \leq \bar u_\la $ in $\Om$.
%We  claim that $u_\la < \bar u_\la$ in $\Om$ and prove it by contradiction. So suppose there exist a $x_0 \in \Om$ such that $u_\la(x_0)= \bar u_\la(x_0)$.
We have
\begin{equation}\label{uniq1}
\int_Q(-\De)^s (u_\la -\bar u_\la)(u_\la -\bar u_\la)~\mathrm{d}x = \la \int_\Om\left( \frac{f(u_\la)}{u_\la^q} - \frac{f(\bar u_\la)}{\bar u_\la^q} \right) (u_\la -\bar u_\la)~\mathrm{d}x
\end{equation}
which gives
\begin{equation}\label{uniq2}
C^n_s\|(u_\la -\bar u_\la)\|^2= \la \int_\Om\left( \frac{f(u_\la)}{u_\la^q} - \frac{f(\bar u_\la)}{\bar u_\la^q} \right) (u_\la -\bar u_\la)~\mathrm{d}x= \la \int_\Om \left( \int_0^1 f_0^\prime(u_\la + t(\bar u_\la-u_\la))~\mathrm{d}t\right)(u_\la -\bar u_\la)^2~\mathrm{d}x
\end{equation}
where $f_0(u)= \displaystyle \frac{f(u)}{u^q} $. From Lemma \ref{lwr-bd} we know that $u_\la \geq \Theta_\la w \geq \Theta_\la k_1\delta^s(x)$ in $\Om$ for some $k_1>0$. So if we define
\[\Om_0 :=\left\{x \in \Om:\; f_1(x)\geq 0\right\}\subset \left\{x \in \Om:\; \delta^s(x)\leq \frac{\alpha}{\Theta_\lambda k_1}\right\}\]
where $f_1(x)=\displaystyle\int_0^1 f_0^\prime(u_\la + t(\bar u_\la-u_\la))~\mathrm{d}t$, then $u_\lambda(x)\geq \alpha$ in $\Omega_0$.
%Then we have $u_\la> \alpha$ in \textcolor{red}{$\Omega\backslash \Om_0$}.
From \eqref{uniq2} we obtain
\[C^n_s\|(u_\la -\bar u_\la)\|^2= \la \left( \int_{\Om_0}f_1(x)(u_\la -\bar u_\la)^2~\mathrm{d}x+  \int_{\Omega\backslash\Om_0}f_1(x)(u_\la -\bar u_\la)^2~\mathrm{d}x\right).\]
Since $f_1 \leq 0$ in $\Omega\backslash\Om_0$,
%Also it is easy to see that $\lim\limits_{u \to 0^+}f_0^\prime(u)=-\infty$ and $f_0^\prime(u)\leq0$ for $u>\sigma$, so there must exist a $M_1>0$ such that $f_0^\prime(u) \leq M_1$ for all $u \geq 0$.
\begin{equation}\label{uniq2bis}
C^n_s\|(u_\la -\bar u_\la)\|^2 \leq \la  \int_{\Om_0}f_1(x)(u_\la -\bar u_\la)^2~\mathrm{d}x.
\end{equation}
We also notice that $\lim\limits_{\la\to +\infty}\delta(x)=0$ for $x \in \Om_0$ and since by (f5) $u_\lambda+t(\bar u_\lambda-u_\lambda)(x)\leq \alpha$ for $x\in\Om_0$, there exists a $M_2>0$ such that $|f^\prime((u_\la + t(\bar u_\la-u_\la))(x))|\leq M_2$ for all $x \in \Om_0$. Therefore we also have the following estimate using Hardy's inequality
\begin{align*}
 \la\int_{\Om_0}f_1(x)(u_\la -\bar u_\la)^2(x)~\mathrm{d}x &\leq \la \int_{\Om_0}\left( \int_0^1 \frac{f^\prime((u_\la + t(\bar u_\la-u_\la))(x))}{(u_\la + t(\bar u_\la-u_\la))^q(x)}~\mathrm{d}t\right)(u_\la -\bar u_\la)^2(x)~\mathrm{d}x\\
 & \leq \la M_2 \int_{\Om_0} \frac{(u_\la -\bar u_\la)^2(x)}{u_\la^q(x)}~\mathrm{d}x \\
 & \leq \la M_2 (\Theta_\la k_1)^{-q}\int_{\Om_0}\frac{(u_\la -\bar u_\la)^2(x)\delta^{(2-q)s}(x)}{\delta^{2s}(x)}~\mathrm{d}x , \; \text{since}\; u_\la \geq \Theta_\la w\\
 &\leq \la M_2 (\Theta_\la k_1)^{-q} \left( \frac{\sigma}{\Theta_\la k_1}\right)^{2-q} \int_{\Om_0}\frac{(u_\la -\bar u_\la)^2(x)}{\delta^{2s}(x)}~\mathrm{d}x\\
 & \leq C(\lambda)\|(u_\la-\bar u_\la)\|^2
 \end{align*}
 where $C(\lambda)=O(\lambda^{-\frac{(1-q)}{1+q}})$. This gives a contradiction for $\la$ large enough since $q \in (0,1)$. Therefore we state that $u_\la \equiv \bar u_\la$ when $\la$ is sufficiently large and this completes the proof.\hfill{\QED}
\end{proof}

\section{A fractional and singular semipositone problem}
 We devote this section to prove the existence of weak solution for the following nonlocal infinite semipositone problem
 \begin{equation*}
 (I_\theta):\;\;\;\;(-\De)^s v=v^p-\frac{\theta}{v^{\gamma}},\;v >0,\; \text{in}\; \Om, \;\;v=0 \; \text{in}\; \mb R^n \setminus \Om
 \end{equation*}
where $p \in (0,1)$, $\theta$ is a positive parameter and $\gamma \in (q,1)$. Before this we consider the following problem
 \begin{equation*}
 (I_0):\;\;\;\; (-\De)^s v=v^p,\;v >0,\; \text{in}\; \Om, \;\;v=0 \; \text{in}\; \mb R^n \setminus \Om
 \end{equation*}
 for $p \in (0,1)$. The energy functional  $E_0 : \tilde H^s(\Om) \to \mb R$ associated to $(I_0)$ is given by
 \[E_0(v):= C^n_s\frac{\|v\|^2}{2} -\frac{1}{p+1}\int_{\Om}|v|^{p+1}~\mathrm{d}x \]
 for $v \in \tilde H^s(\Om)$. Then $E_0$ is weakly lower semicontinuous and coercive which implies that $E_0$ possesses a global minimizer say $v_0 \in \tilde H^s(\Om)$. Since $\inf\limits_{\tilde H^s(\Om)} E_0<0$ and $E_0(|v_0|)\leq E_0(v_0)$, we get $v_0 \not\equiv 0$ and we can assume that $v_0\geq 0$ in $\Om$. Now it is easy to see that $v_0$ solves the problem $(I_0)$ weakly. From Proposition $2.2$ of \cite{barrios}, we say that $v_0\in L^\infty(\Om)$ and then using Theorem $1.2$ of \cite{RoSe} we conclude that $v_0\in C^s(\mb R^n)\cap C_{\phi_{1,s}}(\Om)$. Now by strong maximum principle it follows that $v_0 >0$ in $\Om$. For $\eta >0$ small enough, $\eta\phi_{1,s}$ forms a subsolution of $(I_0)$ and then it is easy to show by weak comparison principle that $v_0\in C_{\phi_{1,s}}^+(\Om)$.
   The uniqueness of $v_0$ as a solution of $(I_0)$ follows by using the Picone identity (Lemma $6.2$ of \cite{Pi-id}) and following the arguments in Theorem $5.2$ in \cite{TJS-para}.\\
   For fix $\mu>0$, let us consider the solution operator $ G(\theta,v): \{|\theta|< \mu\}\times B_\e(v_0) \to C_{\phi_{1,s}}(\Om)$ defined as
   \[G(\theta,v):= v- (-\De)^{-s}\left(v^p-\frac{\theta}{v^\gamma} \right)\]
for $(\theta,v)\in \{|\theta|< \mu\}\times B_\e(v_0)$, where $B_\e(v_0)$ denotes the open ball in $C_{\phi_{1,s}}^+(\Om)$ with center $v_0$ and radius $\e>0$. We point out that for $\epsilon>0$ small enough, $B_\epsilon(v_0)\subset C_{\phi_{1,s}}^+(\Om)$. Furthermore, $G(\theta, u)=0$ if and only if $u$ solves $(I_\theta)$. Let $(\theta,v)\in \{|\theta|< \mu\}\times B_\e(v_0)$ then $(-\De)^{-s}v^p \in C_{\phi_{1,s}}(\Om)$, by Theorem $1.2$ of \cite{RoSe}. Since $v\in L^\infty(\Omega)$, we have that $(-\De)^{-s}v^p \in C_{\phi_{1,s}}(\Om)$. Moreover $(-\De)^{-s}\left(\displaystyle\frac{\theta}{v^{\gamma}}\right)\in C_{\phi_{1,s}}(\Om)$ follows from Theorem $1.2$ in \cite{AJS} and the fact that $v\in C_{\phi_{1,s}}^+(\Om)$. Thus the map $G(\cdot,\cdot)$ is well defined.

\begin{Lemma}\label{cont-singprob}
The map $G$ is continuously Fr\'{e}chet differentiable.
%The map $v \mapsto (-\De)^{-s}\left(\displaystyle\frac{1}{v^{\beta}}\right)$ for any $\beta>0$ is continuous for $v \in C_{\phi_{1,s}}^+(\Om)$.
\end{Lemma}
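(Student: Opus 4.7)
The plan is to compute the candidate partial derivatives of $G$ with respect to $\theta$ and $v$ explicitly, verify the Fr\'echet differentiability by a second-order Taylor expansion of the two nonlinearities $v \mapsto v^p$ and $v \mapsto v^{-\gamma}$, and then show that the resulting derivative depends continuously on $(\theta,v)$. The candidate derivatives are
\[
\partial_\theta G(\theta,v) = (-\De)^{-s}\!\left(\frac{1}{v^{\gamma}}\right),\qquad
\partial_v G(\theta,v)[h] = h - (-\De)^{-s}\!\left(p v^{p-1} h + \gamma\theta v^{-\gamma-1} h\right),
\]
for $(\theta,v)\in\{|\theta|<\mu\}\times B_\e(v_0)$ and $h\in C_{\phi_{1,s}}(\Om)$. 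The first step is to observe that if $\e>0$ is small enough then every $v\in B_\e(v_0)$ satisfies $c_1\phi_{1,s}\leq v\leq c_2\phi_{1,s}$ for constants $c_1,c_2>0$ that are uniform on $B_\e(v_0)$; consequently, for $\|h\|_{C_{\phi_{1,s}}}$ small any convex combination $\xi$ of $v$ and $v+h$ still satisfies $\xi\geq \tfrac{c_1}{2}\phi_{1,s}$.

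The core step is the Taylor-remainder estimate. Pointwise in $\Om$ we have
\[
(v+h)^p - v^p - p v^{p-1}h = R_1(v,h),\qquad |R_1(v,h)|\leq C\,\xi_1^{p-2}h^2,
\]
\[
(v+h)^{-\gamma} - v^{-\gamma} + \gamma v^{-\gamma-1}h = R_2(v,h),\qquad |R_2(v,h)|\leq C\,\xi_2^{-\gamma-2}h^2,
\]
for intermediate points $\xi_1,\xi_2$. Using $|h(x)|\leq \|h/\phi_{1,s}\|_{L^\infty}\,\phi_{1,s}(x)$ and $\xi_i\geq\tfrac{c_1}{2}\phi_{1,s}$, the first remainder is bounded by $C\,\phi_{1,s}^p\,\|h\|_{C_{\phi_{1,s}}}^2$, which sits in $L^\infty(\Om)$; Theorem~1.2 of \cite{RoSe} then yields $(-\De)^{-s}R_1\in C_{\phi_{1,s}}(\Om)$ with norm controlled by $\|h\|_{C_{\phi_{1,s}}}^2$. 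The second remainder is bounded by $C\,\phi_{1,s}^{-\gamma}\,\|h\|_{C_{\phi_{1,s}}}^2\sim \delta^{-s\gamma}\|h\|^2$; since $s\gamma<s<2s$ and the expression is locally H\"older in $\Om$, Theorem~1.2 of \cite{AJS} gives $(-\De)^{-s}R_2\in C_{\phi_{1,s}}(\Om)$ with the same quadratic control. Summing these two contributions proves
\[
\|G(\theta,v+h) - G(\theta,v) - \partial_v G(\theta,v)[h]\|_{C_{\phi_{1,s}}(\Om)} = o\bigl(\|h\|_{C_{\phi_{1,s}}(\Om)}\bigr),
\]
and an analogous (but simpler, since it is linear in $\theta$) calculation handles $\partial_\theta G$.

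For continuity of the derivative, I would use the same two regularity results applied to the increments of the linear symbols $p v^{p-1}h+\gamma\theta v^{-\gamma-1}h$ and $v^{-\gamma}$. Concretely, if $(\theta_n,v_n)\to(\theta,v)$ in the product norm then $v_n^{p-1}\to v^{p-1}$ in $L^\infty(\Om)$ (since $v_n,v\geq c_1\phi_{1,s}$ and $p-1$ is negative but the base stays away from $0$ on compacts while $v^{p-1}\sim \phi_{1,s}^{p-1}$ on the whole domain controls the tail), and $v_n^{-\gamma-1}h\to v^{-\gamma-1}h$ pointwise with a domination of type $C\,\phi_{1,s}^{-\gamma-1}\cdot\phi_{1,s}=C\,\phi_{1,s}^{-\gamma}$. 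Feeding these into Theorems~1.2 of \cite{RoSe} and \cite{AJS} respectively produces convergence in $C_{\phi_{1,s}}(\Om)$ of the derivatives, completing the proof that $G\in C^1$.

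The main obstacle I anticipate is the singular term $\theta/v^{\gamma}$ and its second-order remainder $\xi^{-\gamma-2}h^2$, which is unbounded at $\partial\Om$; the argument above is only rigorous because Theorem~1.2 of \cite{AJS} provides the correct weighted H\"older estimate for $(-\De)^{-s}$ acting on source terms that blow up like $\delta^{-\beta}$ with $\beta\in[0,2s)$ and because the choice $\gamma\in(q,1)\subset(0,1)$ keeps $s\gamma$ strictly below $2s$. Everything else reduces to routine manipulations of Taylor formulas and of the cone $C_{\phi_{1,s}}^+(\Om)$.
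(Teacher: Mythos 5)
Your proposal is correct and follows essentially the same route as the paper's proof: explicit candidate derivatives, a second-order Taylor expansion whose singular remainder is controlled by $\|h\|_{C_{\phi_{1,s}}(\Om)}^2\,\delta^{-s\gamma}$ with $s\gamma<2s$, the weighted regularity estimates (Theorem~1.2 of \cite{RoSe} for the bounded part and Proposition~1.2.9 of \cite{abatangelo} / Theorem~1.2 of \cite{AJS} for the singular part), and the same scheme for continuity of the derivative. The only cosmetic caveat is that $v_n^{p-1}\to v^{p-1}$ does not hold in $L^\infty(\Om)$ (the difference blows up like $\delta^{-s(1-p)}$ at the boundary); what you actually need, and what your weighted bounds already give, is convergence of the products $v_n^{p-1}h$ after division by $\phi_{1,s}$.
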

\begin{proof}
We begin with showing that $G$ is continuous. Let $v,v_k \in B_\e(v_0)$, $|\theta|<\mu$ and $\tau\in \mb R^n$ be such that $(\|v_k-v\|_{C_{\phi_{1,s}}(\Om)}+|\tau| ) \to 0$ as $k \to \infty$, then
\begin{align*}
|G(\theta+\tau,v_k)-G(\theta,v)| &= \left|(v_k-v) - (-\De)^{-s}(v_k^q-v^q) + (-\De)^{-s}\left( \frac{\theta+\tau}{v_k^{\gamma}}- \frac{\theta}{v^{\gamma}}\right)\right|\\
& \leq \|(v_k-v)\|_{C_{\phi_{1,s}}(\Om)}\phi_{1,s}+ C_1\|(v_k^q-v^q)\|_{C_{\phi_{1,s}}(\Om)}\phi_{1,s}\\
& \quad \quad+ \theta \left|(-\De)^{-s}\left(\frac{\gamma(v_k-v)}{(v+\xi (v_k-v))^{\gamma+1}} + \frac{\tau}{v_k^\gamma} \right)\right|\\
& \leq \|(v_k-v)\|_{C_{\phi_{1,s}}(\Om)}\phi_{1,s}(x)+ C_1\|(v_k-v)\|_{C_{\phi_{1,s}}(\Om)}\phi_{1,s}\\
& \quad \quad+ C_3\theta \left|(-\De)^{-s}\left(\frac{\gamma\|(v_k-v)\|_{C_{\phi_{1,s}}(\Om)}}{\delta^{s\gamma}(x)} + \frac{\tau}{\delta^{s\gamma}(x)} \right)\right|
\end{align*}
for appropriate constants $C_i>0$, $i=1,2,3$. Now Proposition $1.2.9$ of \cite{abatangelo} gives that
\[|G(\theta+\tau,v_k)-G(\theta,v)| \leq O(\|v_k-v\|_{C_{\phi_{1,s}}(\Om)}+|\tau| )\phi_{1,s}\]
which implies that $G$ is continuous on $\{|\theta|< \mu\}\times B_\e(v_0)$. Following similar arguments we can show that
\[\lim_{t\to 0^+}\frac{G(\theta, v+t\phi)-G(\theta,v)}{t}= \phi- p(-\De)^{-s}(v^{p-1}\phi) - \theta (-\De)^{-s}(\gamma v^{-\gamma-1}\phi) \]
for $v, \phi \in B_\e(v_0)$. This implies that $G(\theta,\cdot)$ is Gat\'{e}aux differentiable and
\[D_vG(\theta,v)(\phi)= \phi- p(-\De)^{-s}(v^{p-1}\phi) - \theta (-\De)^{-s}(\gamma v^{-\gamma-1}\phi).
\]
Next, to prove that $G$ is Fr\'{e}chet differentiable we first consider
\begin{align*}
&|G(\theta,v+\phi)-G(\theta,v)-D_vG(\theta,v)(\phi)|\\
&= \left|-(-\De)^{-s}\left((v+\phi)^p-v^p-pv^{p-1}\phi \right) + \theta(-\De)^{-s}\left( \frac{1}{(v+\phi)^\gamma}-\frac{1}{v^\gamma}+ \frac{\gamma\phi}{v^{\gamma+1}} \right) \right|.
\end{align*}
Since $v \in C_{\phi_{1,s}}^+(\Om)$ and $\phi\in C_{\phi_{1,s}}(\Om)$, we get $\left|\frac{\phi}{v}\right|\leq K$ for some constant $K\geq 0$. This along with Taylor series expansion gives for some $\theta_0\in (0,1)$
\[\left|(v+\phi)^p-v^p-pv^{p-1}\phi= \frac{p(p-1)\phi^2}{2(v+\theta_0\phi)^{2-p}}\right| \leq C\|\phi\|_{C_{\phi_{1,s}}(\Om)}^2.\]
So applying Theorem $1.2$ of \cite{RoSe}, we get that
\[|(-\De)^{-s}\left((v+\phi)^p-v^p-pv^{p-1}\phi \right)| \leq O\left(\|\phi\|^2_{C_{\phi_{1,s}}(\Om)}\right).\]
Also similar idea gives, for some $\xi_1\in (0,1)$
\begin{align*}
\frac{1}{(v+\phi)^\gamma}-\frac{1}{v^\gamma}+ \frac{\gamma\phi}{v^{\gamma+1}} &= -\frac{\gamma \phi}{(v+\xi_1\phi)^{\gamma+1}}+ \frac{\gamma\phi}{v^{\gamma+1}}
 = \gamma \phi \left( \frac{(v+\xi_1\phi)^{\gamma+1} - v^{\gamma +1}}{(v(v+\xi_1\phi))^{\gamma+1}} \right)\\
 & \leq C_4 \frac{\|\phi\|_{C_{\phi_{1,s}}(\Om)}^2}{\delta^{s\gamma}}
\end{align*}
for appropriate constant $C_4>0$. So again using Proposition $1.2.9$ of \cite{abatangelo} we get
\[\left|(-\De)^{-s}\left( \frac{1}{(v+\phi)^\gamma}-\frac{1}{v^\gamma}+ \frac{\gamma\phi}{v^{\gamma+1}} \right) \right| \leq O\left(\|\phi\|_{C_{\phi_{1,s}}(\Om)}^2\right). \]
Therefore we get
\[\|G(\theta,v+\phi)-G(\theta,v)-D_vG(\theta,v)(\phi)\|_{C_{\phi_{1,s}}(\Om)} \to 0\; \text{as}\; \|\phi\|_{C_{\phi_{1,s}}(\Om)} \to 0. \]
Now we prove continuity of $D_vG(\theta,v)$. Consider $\{v_k\}_{k\in\mb N}\subset B_\epsilon(v_0)$ such that $\|v_k-v\|_{C_{\phi_{1,s}}(\Om)} \to 0$ as $k \to \infty$ then
\begin{align*}
\|D_vG(\theta,v_k)- D_vG(\theta,v)\| = \sup_{0\neq \phi \in C_{\phi_{1,s}}(\Om)}\frac{\|(D_vG(\theta,v_k)- D_vG(\theta,v))\phi\|_{C_{\phi_{1,s}}(\Om)}}{\|\phi\|_{C_{\phi_{1,s}}(\Om)}}.
\end{align*}
We have that
\begin{align*}
(D_vG(\theta,v_k)- D_vG(\theta,v))\phi = -p(-\De)^{-s}((v_k^{p-1}-v^{p-1})\phi)-\gamma \theta(-\De)^{-s}\left( \frac{\phi}{v_k^{\gamma+1}}- \frac{\phi}{v^{\gamma+1}}\right)
\end{align*}
But using $\phi \in C_{\phi_{1,s}}(\Om)$ and again using similar arguments as before we get
\[|(D_vG(\theta,v_k)- D_vG(\theta,v))\phi| \leq O\left(\|\phi\|_{C_{\phi_{1,s}}(\Om)}(\|v_k^{p-1}-v^{p-1}\|_{C_{\phi_{1,s}}(\Om)}+\|v_k-v\|_{C_{\phi_{1,s}}(\Om)})\right)   \]
which implies that $D_vG(\theta,v)$ is continuous. Similarly we can prove that $D_\theta(\theta,v)$ exists and is continuous where $D_\theta(\theta,v)= (-\De)^{-s}\left( \frac{1}{v^\gamma}\right)$. \hfill{\QED}
\end{proof}

The linearisation of the map $G$ with respect to the second variable at $(\theta, v)\in \{|\theta|<\mu\} \times B_\e(v_0)$ given by $\partial_vG(\theta,v): C_{\phi_{1,s}}^+(\Om) \to C_{\phi_{1,s}}(\Om)$ is defined as
\[ \partial_vG(\theta,v)h = h - (-\De)^{-s}\left( pv^{p-1}h+ \frac{\theta \gamma h}{u^{\gamma+1}} \right),\; \text{for}\; h \in C_{\phi_{1,s}}(\Om). \]
 %and the map $v \mapsto \partial_vG(\theta,v)$ is continuous.
 Since $v_0$ solves $(I_0)$, clearly $G(0,v_0)=0$. Now we will show that the map $h \mapsto \partial_vG(0,v_0)h$ is invertible. To do this, we will be studying an eigenvalue problem. Let us define
 \[\Lambda := \inf_{0\not\equiv u \in \tilde H^s(\Om)} \left(\frac{\|u\|^2 - p\int_\Om v_0^{p-1}u^2 ~\mathrm{d}x}{\int_\Om u^2 ~\mathrm{d}x}\right) = \inf_S \left(\|u\|^2 - p\int_\Om v_0^{p-1}u^2 ~\mathrm{d}x\right) \]
where $S= \{u\in \tilde H^s(\Om), \int_\Om u^2 ~\mathrm{d}x =1\}$. Then by Hardy's inequality and $v_0\in C_{\phi_{1,s}}^+(\Om)$ it follows that
\[\int_{\Om}v_0^{p-1}u^2~\mathrm{d}x \leq k_2\int_\Om \frac{u^2}{\delta^{2s}(x)}\delta^{s(p+1)}~\mathrm{d}x < +\infty.\]
 So the functional
 \[I_{00}(u)= \|u\|^2 - p\int_\Om v_0^{p-1}u^2 ~\mathrm{d}x \]
 is well defined on $\tilde H^s(\Om)$. Following standard minimization  arguments and using compact embedding of $\tilde H^s(\Om)$ in $L^2(\Om)$, it is easy to show that $\inf I_{00}(S)= I_{00}(\psi)= \Lambda$ for some $\psi \in S$. Also since $I_{00}(|\psi|)\leq I_{00}(\psi)$, by minimality of $\psi$ we assert that without loss of generality we may assume that $\psi \geq 0$. Then $\psi$ satisfies
 \begin{equation}\label{ef1}
 (-\De)^s \psi = \Lambda \psi +p v_0^{p-1}\psi\; \text{in}\; \Om,\;\; \psi=0 \;\text{in}\; \mb R^n \setminus \Om
 \end{equation}
which implies that $\psi$ is an eigenfunction corresponding to the eigenvalue $\Lambda$ for the operator $(-\De)^s- pv_0^{p-1}$ with homogenous Dirichlet boundary condition in $\Om$. We obtain $\psi \in L^\infty(\Om)$ by following the arguments in Theorem $3.2$ (p. 379) of \cite{palatucci}. So local regularity results assert that $\psi \in C^s_{\text{loc}}(\Om)$. We claim that $\psi>0$ in $\Om$ because if is not true, then there exist  a $x_0\in K \Subset\Om$ such that $\psi(x_0)=0$. But this gives
\[0 > 2C^n_s\int_{\mb R^n} \frac{(\psi(x_0)-\psi(y))}{|x-y|^{n+2s}}~\mathrm{d}y = \Lambda \psi + pv_0^{p-1}(x_0)\psi(x_0) =0\;\text{in}\; K \]
which is  a contradiction. Therefore $\psi >0$ in $\Om$.\\
\textbf{Claim(1):} $\Lambda$ is a principal eigenvalue.\\
We have to show that any eigenfunction (say $\psi$) associated to it does not change sign. Assume by contradiction that $\psi^+\not\equiv 0$ and $\psi^-\not\equiv 0$. Then
\begin{equation*}
\begin{split}
&C^n_s \int_{Q} \frac{(\psi(x)-\psi(y))(\psi^+(x)-\psi^+(y))}{|x-y|^{n+2s}}~\mathrm{d}y\mathrm{d}x\\ &= C^n_s \left(\int_{Q} \frac{(\psi^+(x)-\psi^+(y))(\psi^+(x)-\psi^+(y))}{|x-y|^{n+2s}}~\mathrm{d}y\mathrm{d}x+ 2\int_Q \frac{\psi^-(x)\psi^+(y)}{|x-y|^{n+2s}}~\mathrm{d}y\mathrm{d}x \right)\\
& = \Lambda \int_\Om \psi\psi^+~\mathrm{d}x + p\int_\Om v_0^{p-1}\psi\psi^+\mathrm{d}x.
\end{split}
\end{equation*}
Now if $\psi^+, \psi^-\not\equiv 0$ then
\[\|\psi^+\|^2 - p\int_\Om v_0^{p-1}(\psi^+)^2\mathrm{d}x = \Lambda \int_\Om (\psi^+)^2~\mathrm{d}x -  2\int_Q \frac{\psi^-(x)\psi^+(y)}{|x-y|^{n+2s}}~\mathrm{d}y\mathrm{d}x  < \Lambda \int_\Om (\psi^+)^2~\mathrm{d}x \]
but this contradicts the definition of $\Lambda$. This proves the claim.\\
\textbf{Claim(2):} $\Lambda $ is the unique principal eigenvalue.\\
Suppose not, that is let $\Lambda_0$ is another principal eigenvalue of $(-\De)^s- pv_0^{p-1}$ and $\psi_0\in \tilde H^s(\Om)$ denotes the corresponding eigenfunction. Then $\psi_0>0$ in $\Om$ and satisfies
 \begin{equation}\label{ef2}
  (-\De)^s \psi_0 = \Lambda_0 \psi_0 +p v_0^{p-1}\psi_0\; \text{in}\; \Om,\;\; \psi_0=0 \;\text{in}\; \mb R^n \setminus \Om.
 \end{equation}
 Testing \eqref{ef1} with $\psi_0$ and \eqref{ef2} with $\psi$ gives
 \[\Lambda \int_\Om \psi\psi_0~\mathrm{d}x= \Lambda_0 \int_\Om \psi_0\psi~\mathrm{d}x.\]
 Since $\psi,\psi_0>0$ in $\Om$, we get $\Lambda=\Lambda_0$.\\
%\textbf{Claim:} $\Lambda$ is simple.\\
%Let $\psi_1 \not\equiv0$ be another eigenfunction corresponding to $\Lambda$ such that $$
\textbf{Claim(3):} Any nonnegative eigenfunction $\psi \in C_{\phi_{1,s}}^+(\Om)$.\\
For sufficiently small $\eta>0$, using Theorem~\ref{smp} and Hopf Lemma, it is easy to show that $\psi \geq \eta\phi_{1,s}$. Assume fisrt that $\Lambda \geq 0$ then
\[\psi= \Lambda (-\De)^{-s}\psi + p(-\De)^{-s}\left( \frac{\psi}{v_0^{1-p}}\right)\; \text{in}\; \Om.\]
Since $\psi \in L^\infty(\Om)$ we get $(-\De)^{-s}\psi  \in C_{\phi_{1,s}}(\Om)$. Also since $v_0\in C_{\phi_{1,s}}^+(\Om)$ we get that $$\displaystyle \frac{\psi}{v_0^{1-p}} \sim   \frac{C}{\delta^{s(1-p)}(x)}\; \text{in}\; \Om$$ for some $C>0$. So $(-\De)^{-s}\left(\displaystyle \frac{\psi}{v_0^{1-p}}\right) \in C_{\phi_{1,s}}^+(\Om)$, follows from Proposition $1.2.9$ of \cite{abatangelo} together with $s(1-p) < s$. Therefore $\psi \in C_{\phi_{1,s}}^+(\Om)$ when $\Lambda\geq 0$. In the other case, if $\Lambda<0$ then $\psi$ satisfies
\[(-\De)^s \psi +(-\Lambda) \psi =  p\left( \frac{\psi}{v_0^{1-p}}\right)\sim \frac{C}{\delta^{s(1-p)}(x)}\geq 0\; \text{in}\; \Om.\]
So by using Theorem $1.5 (1)$ of \cite{del-pezzo} and Theorem $1.2$ of \cite{RoSe}, we infer that $\psi \in C_{\phi_{1,s}}^+(\Om)$. This proves the claim.\\
\textbf{Claim(4):} $\Lambda>0$.\\
First we show that $\Lambda$ is non zero. Suppose it is equal to zero then \eqref{ef1} reduces to
\[(-\De)^s \psi = pv_0^{p-1}\psi\; \text{in}\; \Om.\]
Using $v_0$ as a test function in the expression above gives
\[\int_{\mb R^n}(-\De)^s\psi v_0~\mathrm{d}x = p\int_\Om v_0^p\psi~\mathrm{d}x . \]
But we know that $v_0$ is a unique solution of $(I_0)$. Therefore we get
\[p\int_\Om v_0^p\psi~\mathrm{d}x = \int_\Om v_0^p\psi~\mathrm{d}x\]
which gives $p=1$ since $\psi,v_0>0$ in $\Om$. This gives a contradiction, thus $\Lambda \neq 0$. Now we assume by contradiction that $\Lambda <0$. For $\epsilon>0$ small enough, we consider the function $v_0 - \e\psi$. Then since $\Lambda <0$ and $p-1<0$, we get that in $\Om$
\[(-\De)^s(v_0 -\e\psi) = v_0^p -\e \Lambda \psi - \e pv_0^{p-1}\psi > v_0^p  - \e pv_0^{p-1}\psi \geq (v_0-\e\psi)^p. \]
This implies that $v_0 - \e\psi$ forms a strict supersolution of $(I_0)$. It is already known that $\eta\phi_{1,s}$ for sufficiently small choice of $\eta$ forms a subsolution of $(I_0)$. Therefore there must be a function $\hat v\in \tilde H^s(\Om) $ such that $\eta \phi_{1,s}\leq \hat v\leq (v_0-\e\psi)$ which is a solution of $(I_0)$. But this contradicts the uniqueness of $v_0$ due to $\e\psi>0$ in $\Om$. Hence $\Lambda>0 $.

\begin{Theorem}\label{isp}
For a small range of $\theta$, the problem $(I_\theta)$ admits a solution.
\end{Theorem}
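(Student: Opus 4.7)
The strategy is to apply the Implicit Function Theorem to the map $G$ at the base point $(0,v_0)$. By Lemma~\ref{cont-singprob} the map $G: \{|\theta|<\mu\}\times B_\e(v_0) \to C_{\phi_{1,s}}(\Om)$ is continuously Fr\'echet differentiable, and by construction $G(0,v_0)=0$ since $v_0$ is the unique solution of $(I_0)$. Once we verify that the partial linearization
\[
\partial_v G(0,v_0) h \;=\; h \;-\; p\,(-\De)^{-s}\bigl(v_0^{p-1}h\bigr)
\]
is an isomorphism of $C_{\phi_{1,s}}(\Om)$ onto itself, the IFT produces a unique $C^1$-branch $\theta\mapsto v(\theta)$ with $v(0)=v_0$ and $G(\theta,v(\theta))=0$ for $|\theta|<\theta_0$ sufficiently small. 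Shrinking $\theta_0$ if necessary guarantees $v(\theta) \in B_\e(v_0)\subset C_{\phi_{1,s}}^+(\Om)$, so $v(\theta)$ is positive and is precisely a weak solution of $(I_\theta)$.

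To establish the invertibility, I would first show that the linear map $K h := p(-\De)^{-s}(v_0^{p-1}h)$ is compact on $C_{\phi_{1,s}}(\Om)$. Writing $v_0^{p-1}h \sim h/\delta^{s(1-p)}$ and using that $s(1-p)<s$, the estimates of Proposition~$1.2.9$ in \cite{abatangelo} together with Theorem~$1.2$ of \cite{RoSe} produce a uniform H\"older bound for $Kh/\phi_{1,s}$ in terms of $\|h\|_{C_{\phi_{1,s}}(\Om)}$, so an Arzel\`a--Ascoli argument entirely analogous to Proposition~\ref{comp} gives compactness of $K$. Consequently $\partial_v G(0,v_0) = I-K$ is Fredholm of index zero.

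Next I would check that $\ker(I-K)=\{0\}$. If $h\in C_{\phi_{1,s}}(\Om)$ satisfies $Kh=h$, then Hardy's inequality together with $h\in C_{\phi_{1,s}}(\Om)$ shows that $h\in \tilde H^s(\Om)$ and
\[
(-\De)^s h \;-\; p\,v_0^{p-1} h \;=\; 0 \quad \text{in } \Om, \qquad h=0 \quad \text{in } \mb R^n\setminus\Om,
\]
so $h$ is an eigenfunction of $(-\De)^s - p v_0^{p-1}$ associated to the eigenvalue $0$. But Claim~(4) above asserts $\Lambda>0$, and the variational characterization of $\Lambda$ together with the uniqueness of the principal eigenvalue (Claim~(2)) forces every other eigenvalue $\mu$ to obey $\mu>\Lambda>0$; thus $0$ is not an eigenvalue. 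Hence $\ker(I-K)=\{0\}$ and, by the Fredholm alternative, $I-K$ is an isomorphism of $C_{\phi_{1,s}}(\Om)$.

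The main obstacle is this last step: ruling out $0$ as an eigenvalue of $(-\De)^s-pv_0^{p-1}$. The positivity $\Lambda>0$, already proved by comparing $v_0-\e\psi$ with the (unique) solution of $(I_0)$, is the critical ingredient; without it the IFT would fail precisely at $\theta=0$. With invertibility in hand, the Implicit Function Theorem yields the branch $\{v(\theta)\}_{|\theta|<\theta_0}\subset C_{\phi_{1,s}}^+(\Om)$ of positive solutions to $(I_\theta)$, completing the proof for $\theta$ in a small neighborhood of $0$ (in particular for small positive $\theta$ as required in Theorem~\ref{si}). \hfill\QED
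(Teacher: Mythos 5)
Your proposal is correct and follows essentially the same route as the paper: both apply the Implicit Function Theorem to $G$ at $(0,v_0)$ and reduce everything to the invertibility of $\partial_v G(0,v_0)$, which ultimately rests on the positivity $\Lambda>0$ established beforehand. The only (minor) difference is how invertibility is justified — you argue via compactness of $K$ and the Fredholm alternative, whereas the paper solves the linearized equation $(-\De)^s u - p v_0^{p-1}u = \psi$ directly by minimization and proves uniqueness using $\Lambda>0$; both are valid and your variant is, if anything, slightly more systematic.
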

\begin{proof}
We have already proved that $G$ is a continuously Fr\'{e}chet differentiable map. Now we consider the problem
\begin{equation}\label{isp1}
(-\De)^s u -pv_0^{p-1}u = \psi, \; u>0 \;\text{in}\; \Om,\;\; u=0\;\text{in}\; \mb R^n\setminus \Om.
\end{equation}
By defining the energy functional corresponding to it and minimization arguments, it is easy to show that the above problem has a solution $u\in \tilde H^s(\Om)$. Now suppose $u_1,u_2\in \tilde H^s(\Om)$ be two distinct solutions of \eqref{isp1} then
\[0 = \int_{\mb R^n}(-\De)^s(u_1-u_2)(u_1-u_2)~\mathrm{d}x- p\int_\Om v_0^{p-1}(u_1-u_2)^2~\mathrm{d}x \geq \Lambda \int_{\Om}(u_1-u_2)^2~\mathrm{d}x>0 \]
since $\Lambda>0$. This implies that the solution must be unique. Also using similar argument as in Claim(3) gives $u \in C_{\phi_{1,s}}(\Om)$. All this along with previous claims guarantees that the map $ \partial_vG(0,v_0): C_{\phi_{1,s}}(\Om) \to C_{\phi_{1,s}}(\Om) $ is invertible. Hence we apply implicit function theorem to get that there exist a subset $\{|\theta|< \mu^\prime\}\times B_{\e^\prime}(v_0) \subset \{|\theta|< \mu\}\times B_{\e}(v_0)$ for $\mu^\prime <\mu$ and $\e^\prime <\e$ and a $C^1$-map $h\,:\,\{|\theta|<\mu'\}\mapsto B_{\epsilon'}(v_0)$ such that $G(\theta, v)=0$ in $\{|\theta|< \mu^\prime\}\times B_{\e^\prime}(v_0)$ coincides with the graph of $h$. This completes the proof. \hfill{\QED}
\end{proof}

\medskip

\linespread{0.5}

\noindent {\bf Acknowledgements:} The authors were partially funded by IFCAM (Indo-French Centre for Applied Mathematics) UMI CNRS 3494 under the project "Singular phenomena in reaction diffusion equations and in conservation laws".

\end{document}